\newcommand{\commentout}[1]{}
\newcommand{\R}{\mathbb{R}}
\newcommand{\N}{\mathbb{N}}
\newcommand{\beq}{\begin{equation}}
\newcommand{\beqa}{\begin{eqnarray}}
\newcommand{\bea} {\begin{array}{ll}}
\newcommand{\beqan}{\begin{eqnarray*}}
\newcommand{\eeq}{\end{equation}}
\newcommand{\eeqa}{\end{eqnarray}}
\newcommand{\eeqan}{\end{eqnarray*}}
\newcommand{\eea} {\end{array}}
\newtheorem{theorem}{Theorem}[section]
\newtheorem{lemma}[theorem]{Lemma}
\newtheorem{proposition}[theorem]{Proposition}
\newcommand{\cqfd}{{ \hfill
                       {\unskip\kern 6pt\penalty 500
                       \raise -2pt\hbox{\vrule\vbox to 6pt{\hrule width 6pt
                       \vfill\hrule}\vrule} \par}   }}
\title{\Large \bf Eikonal equations and pathwise solutions to fully non-linear SPDEs} 
\author{ Peter K. Friz$^{1,*}$, Paul Gassiat$^{2,*}$, Pierre-Louis Lions$^{3}$ and Panagiotis E. Souganidis$^{4,**}$}
\date{\today}
\begin{document}
\maketitle
\vspace*{1.0cm}
\pagenumbering{arabic}

\begin{abstract}

\noindent%

\noindent We study the existence and  uniqueness of the stochastic viscosity solutions  of fully nonlinear, possibly degenerate, second
order stochastic pde with quadratic Hamiltonians associated to a Riemannian geometry. The results are new and extend 
the class of equations studied so far by the last two authors.

\end{abstract}
\pagestyle{plain} \vspace*{1.0cm}

\noindent {\bf Key words.}  Fully non-linear stochastic partial differential equations; eikonal equations, pathwise stability, rough paths.
\\
\noindent {\bf AMS Class. Numbers.} 35R99, 60H15


\section{Introduction}

\noindent  The theory of stochastic viscosity solutions, %
 including existence, uniqueness and stability, developed by two of the authors (Lions and Souganidis \cite{MR1959710, MR1647162, MR1659958,
MR1799099, MR1807189, book})  is concerned with pathwise 
solutions to fully nonlinear, possibly degenerate, second order stochastic pde, which, in full generality, have the form
\begin{equation}\label{fully}
\begin{cases}
du=F(D^{2}u,Du, u, x, t) dt + \sum_{i=1}^{d}H_{i}\left( Du, u, x\right) d\xi ^{i} \ \text{ in } \ \R^N \times (0,T],\\[2mm]
u=u_0 \ \text{on} \ \R^N \times \{0\};
\end{cases}
\end{equation}
here  $F$  is degenerate elliptic and $\xi =(\xi^1,\cdots, \xi^d)$ is a continuous path. A particular example is a $d$-dimensional Brownian motion, in which case \eqref{fully} should be interpreted in the Stratonovich sense. 
Typically, 
$u \in \mathrm{BUC}(\mathbb{R}^N \times [0,T])$, the space of bounded uniformly continuous real-valued functions on $\mathbb{R}^N \times [0,T]$.
\smallskip

\noindent For the convenience of the reader we present a quick general overview of the theory:
The Lions--Souganidis theory applies to rather general paths when $H=H(p)$  and, as established in \cite{MR1659958, book}, there is a very precise trade off between the regularity of the paths and $H$. When $H=H(p,x)$ and $d=1$, the results of \cite{book} 
deal with general continuous, including Brownian paths, and 
the theory requires certain global structural conditions on $H$ involving higher order (up to three) derivatives in $x$ and $p$. Under similar conditions, Lions and Souganidis \cite{LS} have also established the wellposedness  of  \eqref{fully} for $d>1$ and Brownian paths. For completeness we note that, when $\xi$ is smooth, for example  $\text{C}^1$,   \eqref{fully}  
falls within the scope of the classical  Crandall-Lions viscosity
theory -- see, for example, Crandall, Ishii and Lions \cite{CIL}. 

\noindent The aforementioned conditions are used to control the length of the interval of existence of smooth solutions of the so-called doubled equation 
\begin{equation}\label{ takis double}
dw=(H(D_x w, x)-H(-D_y w,y))d\xi \ \text{ in } \  \R^N \times (t_0 - h_*, t_0+h_*)  
\end{equation}
with  initial datum 
\begin{equation}\label{ takis initial double}
w(x,y,t_0)=\lambda|x-y|^2
\end{equation}
as $\lambda \to \infty$ and uniformly for $|x-y|$ appropriately bounded.

\noindent It was, however, conjectured  in \cite{book} that, given a Hamiltonian $H$,  it may be possible to find initial data other than  $\lambda|x-y|^2$  for the doubled equation, which are better adapted to $H$, thus avoiding some of the growth conditions. As a matter of fact this was illustrated by an example 
when $N=1$. 
 
\smallskip

\noindent In this note we follow up on the remark above about the structural conditions on $H$ and identify a better suited  initial data for \eqref{ takis double} 
for the special class of quadratic Hamiltonians of  the form
\begin{equation}\label{takis quadratic}
H(p,x):=  (g^{-1}(x)p,p)= \sum_{i,j=1}^{N}g^{i,j}\left( x\right)  p_{i}p_{j},
\end{equation}
which are associated to a Riemannian geometry in $\R^N$ and  do not satisfy the conditions mentioned earlier, where 
\begin{equation}\label{takis geometry}
g=( g_{i,j})_{1\leq i,j\leq N} \in C^2(\R^N; {\mathcal{S}}^N)
\end{equation}
is positive definite, that is there exists $C>0$ such that, for all $w\in \R^N$,
\begin{equation}\label{takis positive}
 \frac{1}{C} \left\vert w\right\vert ^{2} \le \sum_{i,j} g_{i,j}\left( x\right) w^{i}w^{j} \le C \left\vert w\right\vert ^{2}.
\end{equation}
\noindent It follows from \eqref{takis quadratic} and \eqref{takis positive}  that $g$ is invertible and  $g^{-1}=( g^{i,j})_{1\leq i,j\leq N}\in C^2(\R^N; {\mathcal{S}}^N)$   is also positive definite;  here ${\mathcal{S}}^N$ is the space of $N\times N$-symmetric matrices and $(p,q)$ denotes the usual inner product of  the vectors $p$, $q$ $\in \mathbb{R}^N$.   
\noindent When dealing with \eqref{fully} it is necessary to strengthen \eqref{takis geometry} and we assume that 
\begin{equation}\label{takis bound}
 g, g^{-1} \in C^2_b(\R^N; {\mathcal{S}}^N), 
\end{equation}
where $ C^2_b(\R^N; {\mathcal{S}}^N)$ is the set of functions bounded in $C^2(\R^N; {\mathcal{S}}^N)$. Note that in this case \eqref{takis positive} is implied trivially.

\smallskip

%
%

%
\noindent 
The distance $d_{g}\left( x,y\right)$ with respect to $g$ of two points $x,y \in \mathbb{R}^N$ is given by
\begin{equation*}
d_{g}\left( x,y\right) :=  \inf \left\{ \int_{0}^{1} \frac{1}{2} (g(\gamma_t) \dot \gamma_t, \dot \gamma_t)^{1/2} dt :
\gamma \in C^1 ([0,1],{\mathbb{R}}^{N}), \gamma_0 = x, \gamma_1 = y \right\},
\end{equation*}%
and their associated  ``energy" is 
\begin{equation} \label{eq:DefE}
e_{g}( x,y) := 
d_{g}^2( x,y)=\inf \left\{ \int_{0}^{1} \frac{1}{4} (g(\gamma_t) \dot \gamma_t, \dot \gamma_t) dt :
\gamma \in C^1 ([0,1],{\mathbb{R}}^{N}), \gamma_0 = x, \gamma_1 = y \right\}.
\end{equation}%
Note that,  if $g=I$ the identity $N\times N$ matrix  in $\R^N$, then $d_I(x,y)=  \tfrac{1}{2}|x-y|$, the usual Euclidean distance,  and 
$e_{I}\left( x,y\right) = \tfrac{1}{4} |x-y|^2;$ more generally, (\ref{takis positive}) implies, with $C=c^2$ and for all $x, y\in {{\mathbb{R}}}^{N}$,
$$\frac{1}{2c}  |x-y| \le  d_{g}\left( x,y\right) \le \frac{1}{2} c |x-y|.$$
\smallskip


\noindent In addition,  we assume that 
\begin{equation}\label{asn:inj}
\text{ there exists $\Upsilon > 0$  such that $e_g \in C^1(\{ (x,y) \in {\R}^{N} \times \R^N: d_g(x,y) < \Upsilon \})$;}  
\end{equation} 
in the language of differential geometry  \eqref{asn:inj} is the same as to say that the manifold $( {\mathbb{R}} ^N,
g)$ has strictly positive injectivity radius. We remark that  \eqref{takis bound}  is sufficient for \eqref{asn:inj} (see, for example,  Proposition \ref{prop:Hess}), though (far) from necessary.


\smallskip


\noindent  We continue with some terminology and notation that we will need in the paper.  We write $I_N$ for the identity matrix in $\R^N$. A modulus is a nondecreasing, subadditive  function $\omega:[0,\infty) \to [0,\infty)$ such that  $\lim_{r\rightarrow 0}\omega \left( r\right) = \omega (0) = 0$.  We write $u\in \mathrm{UC_{g}}\left( {\mathbb{R}}^{N}\right) $ if $%
\left\vert u\left( x\right) -u\left( y\right) \right\vert \leq
\omega \left( d_{g}\left( x,y\right) \right) $ for some modulus $\omega$, 
and, given  $u\in \mathrm{UC_{g}}\left( {\mathbb{R}}^{N}\right)$, we denote by $\omega_{u}$ its modulus.
When $u$ is also bounded, we write $u \in \mathrm{BUC_g}\left( {\mathbb{R}}^{N}\right)$ and may take its
modulus bounded.
We denote by   $\mathrm{USC}$ (resp. $\mathrm{LSC}$)  the set of upper- (resp.
lower) semicontinuous functions in $\R^N$, and $ \mathrm{BUSC}$ (resp.  $ \mathrm{BLSC}$) is the set of bounded functions in  $\mathrm{USC}$ (resp.  $\mathrm{LSC}$). For a bounded continuous function $u:\R^k\to \R$, for some $k\in \N$,  and $A\subset \R^k$,  $\|u\|_{\infty, A}:=\sup_A |u|$. If $a,b\in\R$, then $a\wedge b:=\min (a,b)$,   $a_+:=\max(a,0)$ and $a_-:=\max(-a,0)$. Given a modulus $\omega$ and $\lambda >0$, we use the function $\theta: (0,\infty) \to (0,\infty)$ defined by 
\begin{equation}\label{takis theta}
\theta (\omega ,\lambda ):=\sup_{r\geq 0}\{\omega (r)-\lambda r^{2}/2\};
\end{equation}
and observe  that, in view of the assumed properties of the modulus,
\begin{equation}\label{takis theta 2}
\lim_{\lambda\to\infty} \theta(\omega;\lambda) = 0.
\end{equation}
\noindent Finally, for $k \in \N$,  $C^k_0 ([0,T];{\mathbb{R}}):=\{ \zeta \in C^k([0,T];{\mathbb{R}}): \zeta_0=0 \}$ and,  of for any two $\zeta, \xi \in C_0 ([0,T];{\mathbb{R}})$, 
we set
\begin{equation}\label{takis delta}
\Delta _{T}^{+}:=\max_{s\leq T}(\xi _{s}-\zeta _{s})\geq 0 \ \text{ and } \  \Delta
_{T}^{-}:=\max_{s\leq T}\{-(\xi _{s}-\zeta _{s})\}\geq 0.
\end{equation}%

\noindent We review next the approach taken  in \cite{MR1959710, MR1647162, MR1659958,
MR1799099, MR1807189, book} to define solutions to \eqref{fully}. The key idea is to show that the solutions of the initial value problems with smooth paths, which approximate locally uniformly the given continuous one, form a Cauchy family in
 $BUC(\R^N\times [0,T])$ for all $T>0$, and thus converge to a limit which is independent of the regularization. This limit is considered  as the  solution to \eqref{fully}. It follows that the  solution operator for \eqref{fully} is the extension in the class of continuous paths of the solution operator  for smooth paths. Then \cite{MR1959710, MR1647162, MR1659958,
MR1799099, MR1807189, book} introduced an intrinsic definition for a solution, called stochastic viscosity solution, which is satisfied by the uniform limit. Moreover, it was shown that the stochastic viscosity solutions satisfy a comparison principle and, hence, are intrinsically unique and can be constructed by the classical Perron's method (see \cite{book} and \cite{Seeger} for the complete argument).  The assumptions on the Hamiltonians mentioned above were used in these references 
to obtain both the Cauchy property and the intrinsic uniqueness. 
\smallskip

\noindent To prove the Cauchy property the aforementioned references consider the solutions to \eqref{fully} corresponding to two different smooth paths $\zeta_1$ and $\zeta_2$ and establish an upper bound for the $\sup$-norm of their difference. The classical viscosity theory provides immediately such a bound, which, however, depends on the $L^1$-norm of  $\dot \zeta_1 -\dot \zeta_2$. Such a bound is, of course, not useful since it blows up, as the paths approximate the given continuous path $\xi$.  
The novelty of the Lions-Souganidis theory is that it is possible to obtain far better control of the difference of the solutions based on the $\sup$-norm of $\zeta_1 -\zeta_2$ at the expense of some structural assumptions on $H$.  In the special case of  \eqref{fully} with $F=0$ and $H$ independent of $x$, a sharp estimate was obtained in \cite{book}. It was also remarked  there  that such bound cannot be expected to hold for spatially dependent Hamiltonians without additional restrictions. 
\smallskip

\noindent In this note we take advantage of the very particular quadratic structure of $H$ and obtain a local in time bound on the difference of two solutions with smooth paths. That the bound is local is due to the need to deal with smooth solutions of the Hamilton-Jacobi part of the equation. Quadratic Hamiltonians do not satisfy the assumptions in \cite{book}. Hence, the results here extend the class of  \eqref{fully} for which there exists a well posed solution. The bound obtained is also used to give an estimate for the  solutions to \eqref{fully},\eqref{takis quadratic} corresponding to different merely continuous paths as well as a modulus of continuity.
\smallskip

\noindent Next we present the results and begin with the comparison of solutions with smooth and different paths. Since the assumptions on the metric $g$ are slightly stronger  in the presence of the second order term in \eqref{fully}, we state two theorems. The first is for the first-order problem 
\begin{equation}\label{takis hamilton jacobi}
\begin{cases}
du - (g^{-1}(x) Du, Du)d\xi=0 \ \text {in } \ \R^N \times (0, T],\\[.05mm]
u=u_0 \ \text{on} \ \R^N \times \{0\},
\end{cases}
 \end{equation} 
and the second for \eqref{fully} with $H$ 
 given by \eqref{takis quadratic}.  Then we discuss the extension property and the comparison for general paths.

%


\smallskip

\noindent We first assume that we have smooth driving signals and estimate the difference of solutions. 
Since we are working with ``classical'' viscosity solutions, we write $u_t$ and $\dot \xi_t$ in place of of $du$ and $d\xi_t$.

\begin{theorem}\label{thm:1storder} Assume 
\eqref{takis geometry}, \eqref{takis positive} and \eqref{asn:inj} 
and let  $\xi ,\zeta \in C_0^{1}([0,T];{\mathbb{R}})$
and $u_{0},v_{0}\in \mathrm{BUC_g(\R^N)}$. 
If $u \in \mathrm{BUSC}(\R^N \times [0,T])$  and $v \in \mathrm{BLSC}(\R^N \times [0,T])$
are respectively viscosity sub- and super-solutions to
\begin{equation*}
u_t-(g^{-1}(x)Du, Du) \dot{\xi}\leq 0 \ \mathrm{ in } \ \R^N \times (0,T] \ \ \  u(\cdot,0 )\leq u_{0}  \  \mathrm{ on }  \ \R^N, 
\end{equation*} 
and
\begin{equation*}
v_t-(g^{-1}(x)Dv, Dv)\dot{\zeta}\geq 0 \ \mathrm{ in } \ \R^N \times (0,T] \ \ \ v(\cdot, 0 )\geq v_{0}  \  \mathrm{ on }  \ \R^N,
\end{equation*} 
then, if 
\begin{equation}
\Delta _{T}^{+}+\Delta _{T}^{-}\,<\frac{1}{2\left( \left\Vert u_0
\right\Vert_{\infty; {\mathbb{R}}^{N}}+\left\Vert v_0 \right\Vert_{\infty ; {%
\mathbb{R}}^{N}}\right) } \Upsilon^{2}, \label{AssumptionInjg}
\end{equation} 
\begin{equation}
\sup_{ \R^N \times [0,T]} \left( u-v\right) \;\leq \;\; \sup_{\R^N}\left( u_{0}-v_{0}\right) +\theta ( \omega _{u_{0}} \wedge \omega _{v_{0}},\frac{1%
}{\Delta _{T}^{+}}).  \label{eq:Compxizeta}
\end{equation}%
\end{theorem}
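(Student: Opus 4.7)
The strategy is the classical Hamilton--Jacobi doubling--variable comparison, but with the usual quadratic penalty $\lambda|x-y|^{2}$ replaced by a time--dependent penalty $\lambda(t)\,e_{g}(x,y)$ whose prefactor $\lambda(\cdot)$ absorbs the asymmetry between $\xi$ and $\zeta$. The cornerstone is the eikonal identity
\[
H(D_{x}e_{g}(x,y),x) \;=\; H(-D_{y}e_{g}(x,y),y) \;=\; e_{g}(x,y),
\]
valid wherever $e_{g}$ is $C^{1}$, i.e.\ on $\{d_{g}<\Upsilon\}$; it follows at once from the identity $(g^{-1}Dd_{g},Dd_{g})\equiv 1/4$ away from the cut locus together with the homogeneity of $H$ of degree two in $p$. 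Choosing the scalar $\lambda(\cdot)$ as the solution of the Riccati equation $\dot\lambda=\lambda^{2}(\dot\xi-\dot\zeta)$, explicitly
\[
\lambda(t) \;=\; \bigl(\lambda_{0}^{-1} - (\xi_{t}-\zeta_{t})\bigr)^{-1},
\]
which is positive and finite on $[0,T]$ as soon as $\lambda_{0}<1/\Delta_{T}^{+}$, turns $w(x,y,t):=\lambda(t)e_{g}(x,y)$ into a classical solution of the doubled equation $w_{t}-H(D_{x}w,x)\dot\xi+H(-D_{y}w,y)\dot\zeta=0$ on $\{d_{g}<\Upsilon\}$.

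\noindent\textbf{Doubling argument.} For $\lambda_{0}<1/\Delta_{T}^{+}$ and $\varepsilon>0$ (with a standard doubling in time, if needed, to legitimise the viscosity manipulations in the presence of the $C^{1}$ drivers), set
\[
\Phi(x,y,t):= u(x,t)-v(y,t) - \lambda(t)e_{g}(x,y) - \varepsilon(|x|^{2}+|y|^{2}) - \tfrac{\varepsilon}{T-t},
\]
let $(x_{*},y_{*},t_{*})$ realise its supremum, and apply the viscosity sub/super-solution inequalities with the smooth coupling $\lambda(t)e_{g}(x,y)$. The eikonal identity and the Riccati ODE give, at the maximum, $\partial_{t}(u-v) \le \lambda(t_{*})^{2}e_{g}(x_{*},y_{*})(\dot\xi-\dot\zeta)(t_{*}) = \dot\lambda(t_{*})e_{g}(x_{*},y_{*})$, whereas the interior--maximum condition in time forces $\partial_{t}(u-v) = \dot\lambda(t_{*})e_{g}(x_{*},y_{*}) + \varepsilon/(T-t_{*})^{2}$; comparing the two produces the contradiction $\varepsilon/(T-t_{*})^{2}\le 0$. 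Hence $t_{*}=0$, and evaluating $\Phi$ on the diagonal, together with the elementary bound $u_{0}(x)-v_{0}(y)\le\sup_{\R^{N}}(u_{0}-v_{0}) + (\omega_{u_{0}}\wedge\omega_{v_{0}})(d_{g}(x,y))$ and \eqref{takis theta}, yields
\[
\sup_{\R^{N}\times[0,T]}(u-v) \;\le\; \sup_{\R^{N}}(u_{0}-v_{0}) + \theta\bigl(\omega_{u_{0}}\wedge\omega_{v_{0}},\,2\lambda_{0}\bigr).
\]
Sending $\varepsilon\downarrow 0$ and $\lambda_{0}\uparrow 1/\Delta_{T}^{+}$ produces \eqref{eq:Compxizeta} (in fact with the stronger constant $2/\Delta_{T}^{+}$ in place of $1/\Delta_{T}^{+}$, since $\theta$ is non--increasing in its second argument).

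\noindent\textbf{Main obstacle: staying inside the injectivity radius.} Since $e_{g}$ is only known to be $C^{1}$ on $\{d_{g}<\Upsilon\}$, the whole construction above is legitimate only if the maximiser $(x_{*},y_{*})$ lies in this set; this is precisely where \eqref{AssumptionInjg} enters. Using the elementary bounds $\|u(\cdot,t)\|_{\infty}\le\|u_{0}\|_{\infty}$ and $\|v(\cdot,t)\|_{\infty}\le\|v_{0}\|_{\infty}$ (standard maximum principle, as constants solve the equation) and the inequality $\Phi(x_{*},y_{*},t_{*})\ge\Phi(x_{*},x_{*},0)$, one obtains
\[
\lambda(t_{*})e_{g}(x_{*},y_{*}) \;\le\; 2\bigl(\|u_{0}\|_{\infty}+\|v_{0}\|_{\infty}\bigr) + o_{\varepsilon}(1).
\]
Since $\lambda(t_{*})^{-1}\le \lambda_{0}^{-1}+\Delta_{T}^{-}$, sending $\lambda_{0}\uparrow 1/\Delta_{T}^{+}$ gives $e_{g}(x_{*},y_{*})\le 2(\|u_{0}\|_{\infty}+\|v_{0}\|_{\infty})(\Delta_{T}^{+}+\Delta_{T}^{-})<\Upsilon^{2}$ by \eqref{AssumptionInjg}, so $d_{g}(x_{*},y_{*})<\Upsilon$ as required. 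This a priori localisation, combined with verifying the $C^{1}$ regularity of $e_{g}$ inside $\{d_{g}<\Upsilon\}$, is the only genuine difficulty; the remainder of the argument is a relatively routine instance of the Crandall--Lions comparison machinery, made exact here by the quadratic, geometry--adapted choice of penalty.
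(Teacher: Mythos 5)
Your proposal is correct and follows essentially the same route as the paper: the test function $\lambda(t)\,e_g(x,y)$ with $\lambda(t)=(\lambda_0^{-1}-(\xi_t-\zeta_t))^{-1}$ is precisely the paper's $\Phi^{\lambda}$ of \eqref{eq:phi}, the localisation inside $\{d_g<\Upsilon\}$ via the a priori bound on $e_g$ at the maximiser combined with \eqref{AssumptionInjg} is the same, and the remaining doubling steps differ only cosmetically (the $\varepsilon/(T-t)$ penalty in place of the paper's $\alpha t$). Your side remark that the constant can be sharpened to $2/\Delta_T^+$ (since the penalty at $t=0$ is $\lambda\,d_g^2$, not $\lambda\,d_g^2/2$) is a valid and harmless observation.
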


\smallskip

\noindent We consider now the second-order fully nonlinear
equation \eqref{fully} with quadratic Hamiltonians, that is
the initial value problem %
\begin{equation}\label{takis fully}
\left\{ 
\begin{array}{ll}
du=F\left( D^{2}u,Du,u,x,t\right) dt +(g^{-1}(x) Du, Du)d\xi & \ \mbox{ in } \ %
{\mathbb{R}}^{N} \times 0,T],\\[1mm] 
u(\cdot,0 )=u_{0}\in \mathrm{BUC}( {\mathbb{R}}^{N}), & 
\end{array}%
\right.
\end{equation}%
and introduce assumptions on $F$ in order to have a result similar to Theorem~\ref{thm:1storder}. 

\noindent In order to be able to have some checkable 
structural conditions on $F$, we find it necessary 
to replace \eqref{takis geometry} and \eqref{asn:inj} by the stronger conditions \eqref{takis bound} and 
\begin{equation}  
\label{Ad}
\text{there exists $\Upsilon >0$ such that}  \  D^2d_g^2 \  
\text{is bounded on } \{(x,y): d_g(x,y) <\Upsilon \}.
\end{equation}

\noindent As far as $F\in \text{ C} ( {\mathcal{S}}^N \times {\mathbb{R}} ^N \times {%
\mathbb{R}} ^N\times[0,T]; {\mathbb{R}} ) $ is concerned 
we assume that it is degenerate elliptic, that is for all $X, Y \in  \mathcal{S}^N$ and  $(p,r,x,t) \in {\mathbb{R}} ^N \times 
\mathbb{R}^N\times[0,T]$,
\begin{equation}  \label{deg}
\text{ $F(X,p,r,x,t)\leq F(Y,p,r,x,t)$ \  if \ $X\leq Y$;}
\end{equation}
Lipschitz continuous in $r$, that is 
\begin{equation}\label{takis lip}
\text{there exists $L>0$ such that $|F(X,p,r,x,t)-F(X,p,s,x,t)| \leq L|s-r|;$}
\end{equation}
bounded in $(x,t)$, in the sense that
\begin{equation}  \label{bounded}
\sup _{{\mathbb{R}} ^N\times[0,T]} |F(0,0,0,\cdot, \cdot)| <
\infty;
\end{equation}
and uniformly continuous for bounded $(X,p,r)$, that is, for any $R>0$, 
\begin{equation}  \label{uniform}
\text {$F$ is uniformly continuous on $M_{R} \times B_{R} \times [-R,R]
\times {\mathbb{R}} ^N \times [0,T]$},
\end{equation}
where $M_{R}$ and $B_{R}$ are respectively the balls of radius $R$ in ${\mathcal{S}}^N$ and ${\mathbb{R}}^{N}$.
\smallskip

\noindent Similarly to the classical theory of viscosity solutions, it is also necessary to assume something more about the joint continuity of $F$ in $X,p,x$, namely that
%
%
\begin{equation}  \label{thelemma}
\begin{cases}
\text{for each $R>0$ there exists a modulus $\omega_{F,R}$ such that, for all $\alpha, \varepsilon >0$ and uniformly on}\\
\text{ $t\in [0,T]$ and
$r\in [-R,R]$,} \\ 
F(X, \alpha D_{x}d_g^{2}(x,y), r,x,t)-F(Y,-\alpha
D_{y}d_g^{2}(x,y),r,x,t)\leq \omega_{F,R}(\alpha d_g^{2}(x,y)+d_g(x,y) + \varepsilon), \\ 
\text {whenever $d_g(x,y) < \Upsilon$ and $X,Y\in {\mathcal{S}}^N$ are
such that, for $A=D_{(x,y)}^{2}d_g^2(x,y)$, } \\[1mm] 
-(\alpha^2\varepsilon^{-1} + \|A\|) \left( 
\begin{array}{cc}
I\; & 0 \\ 
0\; & I%
\end{array}%
\right) \leq \left( 
\begin{array}{cc}
X & 0 \\ 
0 & -Y%
\end{array}%
\right) \leq \alpha A+\varepsilon A^{2}.\\[.5mm] 
\end{cases}%
\end{equation}
%
%
Note that in the deterministic theory the above assumption 
is stated using the Euclidean distance. Here it is convenient to use $d_g$ and as a result we find it necessary  to strengthen the 
assumptions on the metric $g$. 
\smallskip

\noindent To simplify the arguments below, instead of \eqref{takis lip}, we will assume that $F$ monotone in $r$, that is 
\begin{equation}  \label{monotone}
\text{there exists $\rho >0$ such that \ $F(X,p,r,x,t)-F(X,p,s,x,t) \geq
\rho (s-r)$ whenever $s\geq r$;}
\end{equation}
\noindent this is, of course, not a restriction since we can always consider the change $u(x,t)=e^{(L+\rho)t}v(x,t)$, which yields an equation for 
$v$ with a new $F$ satisfying \eqref{monotone} and path $\xi '$ such that $\dot \xi'_t= e^{(L+\rho)t}\dot \xi_t$.
\smallskip

\noindent To state the result we introduce some additional notation. 
For $\gamma >0$, we write
\begin{equation}
\tilde{\theta}(\omega ;\gamma ):=\sup_{r\geq 0}(\omega (r)-%
\frac{\gamma }{2}r),  \label{eq:defTheta}
\end{equation}
and, for $\xi , \zeta \in C^1([0,\infty);\R)$,
\begin{equation}\label{takis delta}
\Delta _{T}^{\gamma ,+}:=\sup_{t\leq T}\int_{0}^{t}e^{\gamma s}(\dot{\xi}%
_{s}-\dot{\zeta}_{s})ds \ \ \text{and} \  \ \Delta _{T}^{\gamma,-}:=\sup_{t\leq
T(}-\int_{0}^{t}e^{\gamma s}(\dot{\xi}_{s}-\dot{\zeta}_{s})ds).
\end{equation}
\noindent Finally, for  bounded $u_0, v_0:\R^N\to \R$, let 
\begin{equation*}
K:=\frac{2}{\rho }\sup_{(x,t) \in \R^N \times [0,T]}F(0,0,0,x,t)+\Vert u_{0}\Vert _{\infty ;{\mathbb{%
R}}^{N}}+\Vert v_{0}\Vert _{\infty ;{\mathbb{R}}^{N}}.
\end{equation*}

\noindent We have:
\begin{theorem}
\label{thm:2ndorder} Assume  \eqref{takis bound}, \eqref{Ad}, \eqref{deg},  \eqref{takis lip}, \eqref{bounded}, \eqref{uniform}  %
and \eqref{thelemma}, 
and let $\xi ,\zeta \in C_0^{1}([0,T];{\mathbb{R}})$ 
$u_{0},v_{0}\in \mathrm{BUC}({\mathbb{R}}^{N})
$ and $T>0$. If  $u$ $\in \mathrm{BUSC}( \R^N \times [0,T])$ and $v\in $ $\mathrm{BLSC}(  \R^N \times [0,T]) $ are respectively  viscosity sub- and super-solutions of  
\begin{equation}
u_t-F(D^{2}u,Du,u,x,t)-(g^{-1}(x) Du, Du)\dot{\xi}\leq 0\ \mbox{ in }{%
\mathbb{R}}^{N}\times (0,T]\qquad u(0,\cdot )\leq u_{0}\ \text{ on}\ {%
\mathbb{R}}^{N},  \label{eq:PDExi}
\end{equation}%
and 
\begin{equation}
v_t-F(D^{2}v,Dv,v,x,t)-(g^{-1}(x) Dv, Dv)\dot{\zeta}\geq 0\ \mbox{ in }{%
\mathbb{R}}^{N}\times (0,T]\qquad v(0,\cdot )\geq u_{0}\ \text{ on}\ {%
\mathbb{R}}^{N},  \label{eq:PDEzeta}
\end{equation}
then, if $\Gamma := \{ \gamma >0: \Delta _{T}^{\gamma ,+}+\Delta
_{T}^{\gamma,-}<\frac{\Upsilon^{2}}{4K} \ \text{and} \ \Delta
_{T}^{\gamma,-} <1\ \},$ 
\begin{equation}\label{takis zeta}
\begin{cases}
\sup_{ \R^N \times [0,T]} \left( u-v\right) \leq \sup_{\R^N} \left( u_{0}-v_{0}\right) _{+}
\\[1.5mm]
+\inf_{\gamma \in \Gamma }[ \theta ( \omega _{u_{0}}\wedge
\omega _{v_{0}},\frac{1}{\Delta _{T}^{\gamma ,+}}) +\frac{1}{\rho }%
\tilde{\theta}( \omega _{F,K};\gamma ) + \frac{1}{\rho} \omega
_{F,K}( 2 (K( \Delta _{T}^{\gamma ,+}+\Delta _{T}^{\gamma
;-}) )^{1/2} ].
\end{cases}
\end{equation}
\end{theorem}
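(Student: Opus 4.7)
The plan is to run a doubling-of-variables argument in the spirit of the proof of Theorem~\ref{thm:1storder}, now with the additional bookkeeping required by the second-order operator $F$. The key feature to exploit is the eikonal identity
\[
(g^{-1}(x)D_x e_g(x,y), D_x e_g(x,y)) = (g^{-1}(y)D_y e_g(x,y), D_y e_g(x,y)) = 4\,e_g(x,y),
\]
valid wherever $e_g$ is $C^1$. This identity expresses the fact that $\lambda e_g$ is annihilated by the doubled quadratic Hamiltonian, and it is precisely why the time-dependent penalization $\alpha(t)\,e_g(x,y)$ is the right substitute for $\lambda|x-y|^2$: at any candidate maximum the entire signal contribution collapses to $4\alpha^2 e_g(\dot\xi - \dot\zeta)$, which is then absorbed by an explicit ODE for $\alpha$.

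First I would consider
\[
\Phi(x,y,t) := u(x,t) - v(y,t) - \alpha(t)\,e_g(x,y) - \varepsilon(|x|^2 + |y|^2) - \sigma t,
\]
with $\sigma, \varepsilon > 0$ small and $\alpha : [0,T] \to (0,\infty)$ to be chosen. Boundedness of $u,v$ and the Euclidean perturbation produce a maximizer $(\bar x, \bar y, \bar t)$. When $\bar t = 0$ the standard split $u_0(\bar x) - v_0(\bar y) \leq \sup(u_0 - v_0) + (\omega_{u_0} \wedge \omega_{v_0})(d_g(\bar x,\bar y))$ combined with $\alpha(0)$ of order $1/\Delta_T^{\gamma,+}$ gives the $\theta(\omega_{u_0}\wedge\omega_{v_0},1/\Delta_T^{\gamma,+})$ summand via the definition of $\theta$. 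When $\bar t > 0$, Ishii's parabolic lemma produces subjet/superjet data $(a, p_x, X)$ for $u$ and $(b, -p_y, Y)$ for $v$ with $p_x \approx \alpha D_x e_g$, $p_y \approx \alpha D_y e_g$, $a - b = \alpha'(\bar t) e_g + \sigma$, and matrices satisfying the hypothesis of \eqref{thelemma} up to $\varepsilon$-corrections.

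Subtracting the sub-/super-solution inequalities, using the eikonal identity to evaluate the signal contributions, and telescoping the $F$-difference through intermediate arguments, the structural condition \eqref{thelemma} bounds the $(X,p_x)\!\to\!(Y,-p_y)$ jump by $\omega_{F,K}(\alpha e_g + d_g + \varepsilon)$, the monotonicity \eqref{monotone} contributes the good term $-\rho(u(\bar x,\bar t) - v(\bar y,\bar t))$, and \eqref{uniform} controls the $\bar x \!\to\! \bar y$ shift by a modulus in $d_g$. The outcome is schematically
\[
\bigl(\alpha'(\bar t) - 4\alpha(\bar t)^2(\dot\xi_{\bar t} - \dot\zeta_{\bar t})\bigr) e_g + \rho(u-v) + \sigma \;\leq\; \omega_{F,K}(\alpha e_g + d_g + \varepsilon) + \omega_{F,K}(d_g) + o_\varepsilon(1).
\]
Choosing $\alpha(t)$ as the explicit solution of an ODE of the form $\alpha' = 4\alpha^2(\dot\xi - \dot\zeta) + c\gamma\alpha$ with $\alpha(0)$ calibrated to $\Delta_T^{\gamma,+}$, the condition $\Delta_T^{\gamma,+} + \Delta_T^{\gamma,-} < \Upsilon^2/(4K)$ and the a priori bound $\alpha(\bar t) e_g(\bar x,\bar y) \leq K$ (obtained by comparing $\Phi(\bar x,\bar y,\bar t)$ with $\Phi$ restricted to the diagonal and using the $L^\infty$-bound $\|u\|_\infty, \|v\|_\infty \leq K/2$ that defines $K$) together force $d_g(\bar x,\bar y) \leq 2\sqrt{K(\Delta_T^{\gamma,+} + \Delta_T^{\gamma,-})} < \Upsilon$, placing $(\bar x,\bar y)$ inside the regularity set of $e_g$ where \eqref{Ad} applies. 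The LHS then reads $c\gamma\alpha e_g + \rho(u-v) + \sigma$, and letting $\varepsilon, \sigma \to 0$, using $\tilde\theta$ to absorb $\omega_{F,K}(\alpha e_g) - c\gamma\alpha e_g$, and inserting the $d_g$-bound into the remaining $\omega_{F,K}(d_g)$-terms, yields after division by $\rho$ the three summands of \eqref{takis zeta}.

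The main obstacle is the joint calibration of the ODE for $\alpha$ with the exponential weight $e^{\gamma t}$: $\alpha$ must be selected so as to simultaneously cancel the signed path contribution $4\alpha^2 e_g(\dot\xi - \dot\zeta)$, produce the good multiple $c\gamma\alpha e_g$ that pairs with $\tilde\theta(\omega_{F,K},\gamma)$, and remain strictly positive on $[0,T]$. This last point is exactly why the $\Gamma$-condition takes the precise form it does, and why the auxiliary bound $\Delta_T^{\gamma,-}<1$ is imposed. A secondary technical point is checking that the a priori bound $\alpha(\bar t) e_g(\bar x,\bar y) \leq K$ is compatible with the $\Upsilon$-injectivity radius throughout the entire argument, so that $e_g$ can legitimately be used as a smooth test function.
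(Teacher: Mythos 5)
Your plan is essentially the paper's proof. The paper packages your Riccati-type coefficient as the explicit formula
\[
\Phi^{\lambda,\gamma}(x,y,t)=\frac{\lambda e^{\gamma t}}{1-\lambda\int_0^t e^{\gamma s}(\dot\xi_s-\dot\zeta_s)\,ds}\,e_g(x,y),
\]
whose time coefficient $\hat\alpha(t)$ solves exactly $\dot{\hat\alpha}=\hat\alpha^2(\dot\xi-\dot\zeta)+\gamma\hat\alpha$, $\hat\alpha(0)=\lambda$ (your ``ODE for $\alpha$''), and then sends $\lambda\uparrow(\Delta_T^{\gamma,+})^{-1}$ — the same calibration of $\alpha(0)$ you describe. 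The $\hat t=0$ case, the Ishii-lemma step with matrices controlled by \eqref{Ad}/\eqref{thelemma}, the $\tilde\theta$-absorption of $\omega_{F,K}(\hat\alpha e_g)-\gamma\hat\alpha e_g$, and the bound on $e_g(\hat x,\hat y)$ that keeps the maximizer in the injectivity neighborhood are all as in the paper. Two small bookkeeping points worth flagging: with the paper's normalization $d_I=\tfrac12|x-y|$, the eikonal identity is $(g^{-1}D_xe_g,D_xe_g)=e_g$, not $4e_g$, so the $4$ in your ODE and your $c$ are normalization artifacts; and the paper does not add the $\sigma t$ slope — after reducing to the monotone case \eqref{monotone} the strictly positive term $\rho(u-v)_+$ already plays that role, so the extra $\sigma$ is redundant (though harmless). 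Also, in the paper's decomposition it is \eqref{thelemma} (not \eqref{uniform}) that controls the $\hat x\to\hat y$ shift together with the matrix gap; \eqref{uniform} only absorbs the $O(\delta)$ corrections coming from the Euclidean localization.
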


\noindent Under their respective assumptions, Theorem~\ref{thm:1storder} and  Theorem~\ref{thm:2ndorder} imply that,
for paths $\xi\in C^1([0,\infty);\R)$ and $g\in \text{BUC}_g(\R^N)$, the initial value problems \eqref{takis hamilton jacobi} and \eqref{takis fully} have well-defined solution operators $$\mathcal{S}%
:(u_{0},\xi )\mapsto u\equiv \mathcal{S}^{\xi }\left[ u_{0}\right].$$

%

\noindent The main interest in the estimates \eqref{eq:Compxizeta} and \eqref{takis zeta}   is that they  provide a unique
continuous extension of this solution operator to all $\xi \in C([0,\infty);\R)$. Since the  proof is a simple reformulation of \eqref{eq:Compxizeta} and \eqref{takis zeta}, we omit it. 

\begin{theorem}
Under the assumptions of Theorem~\ref{thm:1storder} and  Theorem~\ref{thm:2ndorder}, the solution operator $$\mathcal{S}:BUC(\R^N) \times C^1([0,\infty);\R) \to BUC(\R^N \times [0,T])$$
admits a unique continuous extension to $$\bar{\mathcal{S}}:BUC(\R^N) \times C([0,\infty);\R) \to BUC(\R^N \times [0,T]).$$ 
In addition, there exists a nondecreasing $\Phi: [0,\infty )\rightarrow \lbrack 0,\infty ]$, depending only on the moduli and $\sup$-norms of 
$u_0, v_0 \in \mathrm{BUC_g} $, such that 
$\lim_{r\rightarrow 0}\Phi \left( r\right) = \Phi (0) = 0$, and,  for all $\xi ,\zeta \in C([0,T];{\mathbb{R}})$,
 \begin{equation} \label{eq:cont1storder}
\left\Vert \mathcal{S}^{\xi }\left[ u_{0}\right] -\mathcal{S}^{\zeta }\left[
v_{0}\right] \right\Vert _{\infty ;  {\mathbb{R}}^{N} \times [0,T]}\leq \Vert
u_{0}-v_{0}\Vert _{\infty ;{\mathbb{R}}^{N}}+\Phi \left( \left\Vert \xi
-\zeta \right\Vert _{\infty ;\left[ 0,T\right] }\right).
\end{equation}%

\end{theorem}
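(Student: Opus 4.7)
The plan is to first convert the comparison estimates \eqref{eq:Compxizeta} and \eqref{takis zeta} into a genuine continuity estimate of the form \eqref{eq:cont1storder} for $C^1$ driving paths, and then to extend $\mathcal{S}$ to continuous paths by a standard Cauchy sequence argument. Applying Theorem~\ref{thm:1storder} (respectively Theorem~\ref{thm:2ndorder}) twice, swapping the roles of $(u_0,\xi)$ and $(v_0,\zeta)$, turns the one-sided bound on $\sup(u-v)$ into a two-sided $L^\infty$ bound on $\mathcal{S}^\xi[u_0]-\mathcal{S}^\zeta[v_0]$.

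In the first-order case, the definitions in \eqref{takis delta} give $\Delta_T^+ + \Delta_T^- \leq 2\Vert\xi-\zeta\Vert_{\infty;[0,T]}$ directly, so one may set
$$\Phi(r) \;:=\; \theta\bigl(\omega_{u_0}\wedge \omega_{v_0},\,(2r)^{-1}\bigr),$$
which is a modulus by \eqref{takis theta 2}; the smallness hypothesis \eqref{AssumptionInjg} is automatically met for $r$ small. In the second-order case the quantities $\Delta_T^{\gamma,\pm}$ involve $\dot{\xi}-\dot{\zeta}$ rather than $\xi-\zeta$, so the reformulation requires integration by parts,
$$\int_0^t e^{\gamma s}\bigl(\dot{\xi}_s-\dot{\zeta}_s\bigr)\,ds \;=\; e^{\gamma t}(\xi_t-\zeta_t) \,-\, \gamma\int_0^t e^{\gamma s}(\xi_s-\zeta_s)\,ds,$$
which yields $\Delta_T^{\gamma,+}+\Delta_T^{\gamma,-} \leq 4 e^{\gamma T}\,\Vert\xi-\zeta\Vert_{\infty;[0,T]}$. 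I would then choose $\gamma = \gamma(r)$, as a function of $r := \Vert\xi-\zeta\Vert_{\infty;[0,T]}$, so that each of the three summands in \eqref{takis zeta} tends to $0$: the middle term $\rho^{-1}\tilde{\theta}(\omega_{F,K};\gamma)$ forces $\gamma(r)\to\infty$, while the other two require $e^{\gamma(r) T} r \to 0$. The balanced choice $\gamma(r) := (2T)^{-1}\log(1/r)$, which gives $e^{\gamma(r)T} r = r^{1/2}$, meets both needs and also eventually places $\gamma(r)$ inside the admissibility set $\Gamma$; summing the three resulting terms defines $\Phi$.

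Once \eqref{eq:cont1storder} is secured for $C^1$ paths, the extension is routine. Given $\xi \in C([0,T];\R)$ and smooth $\xi^n \to \xi$ uniformly, the estimate applied to pairs $(\xi^n,\xi^m)$ shows that $\{\mathcal{S}^{\xi^n}[u_0]\}$ is Cauchy in $BUC(\R^N \times [0,T])$; its limit, independent of the approximating sequence by the same estimate, defines $\bar{\mathcal{S}}^\xi[u_0]$. Passing to the limit in \eqref{eq:cont1storder} written for $(\xi^n,\zeta^n)$ yields the inequality for continuous paths and, in particular, the uniqueness of the extension.

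I expect the main obstacle to be the $\gamma$-tuning in the second-order case: one must make $\gamma$ large enough to shrink $\tilde{\theta}(\omega_{F,K};\gamma)$ without letting the exponential factor $e^{\gamma T}$ overwhelm the smallness of $\Vert\xi-\zeta\Vert_\infty$, and one must simultaneously verify that the chosen $\gamma(r)$ satisfies the constraints defining $\Gamma$ (namely $\Delta_T^{\gamma,+}+\Delta_T^{\gamma,-}<\Upsilon^2/(4K)$ and $\Delta_T^{\gamma,-}<1$) for all sufficiently small $r$.
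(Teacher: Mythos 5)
Your proposal is correct and fills in exactly the "simple reformulation" that the paper explicitly omits: bounding $\Delta_T^{\pm}$ (resp.\ $\Delta_T^{\gamma,\pm}$, after integration by parts, using $\xi_0=\zeta_0=0$) by $\|\xi-\zeta\|_{\infty}$ (resp.\ $2e^{\gamma T}\|\xi-\zeta\|_\infty$), applying the comparison estimate symmetrically to get a two-sided bound, tuning $\gamma(r)$ so that $e^{\gamma(r)T}r\to 0$ while $\gamma(r)\to\infty$ and $\gamma(r)\in\Gamma$ for small $r$, and then extending by the standard Cauchy-in-$BUC$ argument. This is the intended argument.
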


\smallskip



\noindent We also remark that for both problems  the proofs yield a, uniform in $t \in [0,T]$ and $\|\xi-\zeta\|_{\infty;[0,T]}$,  estimate 
for $u(x,t) - v(y,t)$. 
Applied to the solutions of \eqref{takis hamilton jacobi} and \eqref{takis fully}, this yields a (spatial) modulus of continuity which depends only on the initial datum,  $g$ and $F$ but not $\xi$. This allows to see (as in   \cite{DFO} and  \cite{MR1959710, MR1647162, MR1659958,
MR1799099, MR1807189, book}) that $\mathcal{S}$ and then $\bar{\mathcal{S}}$ indeed takes values in $BUC(\R^N \times [0,T])$.

%
%

%
%
\smallskip

\noindent An example of $F$ that satisfies the assumptions of Theorem~\ref{thm:2ndorder} is 
 the  
Hamilton-Jacobi-Isaacs operator 
\begin{equation}
F\left( M,p,r,x,t\right) = \inf_\alpha \sup_\beta \left\{ \mathrm{tr}\left( \sigma_{\alpha \beta} \sigma_{\alpha \beta} ^{T}\left( p,x\right) M\right)
+b_{\alpha \beta}\left( p,x\right) - c_{\alpha \beta} (x) r \right\},  \label{F_generic}
\end{equation}
with  
\begin{equation}\label{takis 1000}
\text{$\sigma, b, c$  bounded uniformly in $\alpha, \beta$}
\end{equation}
such that, for some modulus $\omega$ and 
constant $C>0$ and uniformly in $\alpha, \beta$, 
\begin{equation}\label{takis sigma1}
\left\vert \sigma_{\alpha \beta} (p,x)-\sigma_{\alpha \beta} (q,y)\right\vert \leq C(\left\vert
x-y\right\vert +\frac{\left\vert p-q\right\vert }{\left\vert p\right\vert
+\left\vert q\right\vert }),
\end{equation}%
and
\begin{equation}\label{takis sigma2}
\left\vert b_{\alpha \beta}(p,x)-b_{\alpha \beta}(q,y)\right\vert \leq \omega ((1+ |p|
+ |q|)|x-y|+ |p-q|),\;\;\;\;\;\;
\left| c_{\alpha \beta}(x) - c_{\alpha \beta}(y) \right| \leq \omega(|x-y|).
\end{equation}

\smallskip

\noindent The paper is organized as follows. In the next section we prove Theorem~\ref{thm:1storder}. Section 3 is about the proof of Theorem~\ref{thm:2ndorder}. In the last section we state and prove a result showing that \eqref{takis bound} implies \eqref{Ad} and verify that  \eqref{F_generic} satisfies the assumptions of Theorem~\ref{thm:2ndorder}. 

\section{The first order case: The proof of Theorem \protect\ref{thm:1storder}}

We begin by recalling without proof the basic  properties of the Riemannian energy $e_g$ which we need in this paper.
For more discussion we refer to, for example, \cite{MM} and the references therein.  

\begin{proposition} \label{prop:eik} 
Assume  \eqref{takis geometry}, \eqref{takis positive} and \eqref{asn:inj}. The  Riemannian energy $e_g$ defined by \eqref{eq:DefE} is (locally) absolutely continuous, almost everywhere  differentiable  and satisfies the Eikonal equations 
\begin{equation}\label{takis eikonal}
(g^{-1}(y)D_{y}e_g,  D_{y}e_g)=(g^{-1}(x) D_{x}e_g, D_{x}e_g)=e_g\left( x,y\right),
\end{equation}
on a subset $E$ of  $ \R^N \times \R^N$ of full measure. Moreover,  
$\left\{ (x,y) \in \R^N\times \R^N : d_g(x,y) < \Upsilon \right\} \subset E.$
\end{proposition}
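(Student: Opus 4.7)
The plan is to deduce Proposition~\ref{prop:eik} from standard Riemannian properties of the distance $d_g$, exploiting the facts that $e_g = d_g^2$ and that the Lagrangian $\tfrac12 (g(\gamma)\dot\gamma,\dot\gamma)^{1/2}$ in \eqref{eq:DefE} is exactly half the usual Riemannian arclength, so that $d_g$ is half the classical Riemannian distance and the Eikonal identity $(g^{-1} D d^{\rm std}, D d^{\rm std}) = 1$ for the classical distance converts, via the chain rule, into the claimed identity for $e_g$.

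Step 1 (basic regularity). The two-sided estimate $\frac{1}{2c}|x-y| \le d_g(x,y) \le \frac{c}{2}|x-y|$ recorded just after \eqref{eq:DefE} together with the triangle inequality for $d_g$ gives global Lipschitz continuity of $d_g$ on $\R^N \times \R^N$. Since $d_g$ grows linearly, $e_g = d_g^2$ is locally Lipschitz on $\R^N \times \R^N$. Rademacher's theorem then produces a full-measure set $E \subset \R^N \times \R^N$ on which $e_g$ is (Fr\'echet) differentiable.

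Step 2 (Eikonal on $E$). Fix $y_0$ and consider $u(x) := d_g(x,y_0)$. The map $u$ is a viscosity solution, on $\R^N \setminus \{y_0\}$, of $(g^{-1}(x) Du, Du) = \tfrac14$: the subsolution inequality comes from the triangle inequality along a smooth outgoing curve from $x_0$, and the supersolution inequality from testing against near-minimizers of the infimum in \eqref{eq:DefE}. At $(x_0,y_0) \in E$ the derivative of $u$ exists, so the viscosity relation collapses to a pointwise identity, and the chain rule $D_x e_g = 2 d_g \, D_x d_g$ yields $(g^{-1}(x_0) D_x e_g, D_x e_g) = 4 d_g^2 \cdot \tfrac14 = e_g(x_0,y_0)$. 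The analogous argument in $y$ gives the second identity. This step uses only \eqref{takis geometry} and \eqref{takis positive}.

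Step 3 (inclusion $\{d_g < \Upsilon\} \subset E$). Assumption \eqref{asn:inj} states exactly that $e_g \in C^1$ on this open set, so each such point is automatically a point of differentiability, hence belongs to $E$. A more concrete route, which also explains the geometric content, is that inside the injectivity radius the exponential map produces a unique smooth minimizing geodesic $\gamma$ from $x$ to $y$, and the first variation formula yields the classical expressions $D_x e_g(x,y) = -g(x)\dot\gamma_0$ and $D_y e_g(x,y) = g(y)\dot\gamma_1$, from which $(g^{-1} D_x e_g, D_x e_g) = (g(x)\dot\gamma_0, \dot\gamma_0) = 4 e_g(x,y)/4 = e_g(x,y)$ is immediate.

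The main obstacle is the careful passage in Step~2 from the dynamic-programming/triangle inequalities to pointwise identities at a.e.~differentiability points, because without \eqref{asn:inj} minimizing curves in \eqref{eq:DefE} need not be unique and the infimum need not be attained in a classical sense. This is however standard material in Riemannian geometry under \eqref{takis geometry}--\eqref{takis positive} (see \cite{MM}); the region $\{d_g < \Upsilon\}$ that actually enters the subsequent proofs in this paper bypasses the difficulty through the built-in $C^1$ smoothness of $e_g$.
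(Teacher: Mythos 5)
The paper does not prove Proposition~\ref{prop:eik}: Section~2 opens by \emph{recalling these facts without proof}, pointing the reader to \cite{MM} (Mantegazza--Mennucci) for the underlying arguments. So there is no in-paper proof to compare against, and what you have written is in effect a proof supplied from scratch. Your three-step outline is the standard route and is, up to normalization, precisely what \cite{MM} do: Lipschitz plus Rademacher gives a full-measure differentiability set; the dynamic-programming/viscosity argument for the slice $u(\cdot)=d_g(\cdot,y_0)$ together with the chain rule for $e_g=d_g^2$ upgrades this to the pointwise Eikonal identity at every point of differentiability (noting that on the diagonal, which has measure zero anyway, both sides vanish); and the inclusion $\{d_g<\Upsilon\}\subset E$ is then immediate from assumption \eqref{asn:inj} since $C^1$ forces differentiability everywhere on that set. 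One slip worth fixing in the ``concrete route'' of Step~3: with the paper's conventions, where $p=\tfrac12 g(x)v$ and $D_x e_g=-p$ (cf. \eqref{eq:HJ}, \eqref{eq:DxE}), the first-variation formula reads $D_x e_g(x,y)=-\tfrac12 g(x)\dot\gamma_0$, not $-g(x)\dot\gamma_0$, and correspondingly $(g(x)\dot\gamma_0,\dot\gamma_0)=4e_g(x,y)$, not $e_g(x,y)$; your two factor-of-two errors cancel in the final line, but the intermediate identities are off. This does not affect the validity of Step~3, since the abstract argument (``$C^1$ on $\{d_g<\Upsilon\}$ implies that set lies in $E$'') already suffices.
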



\noindent The next lemma, which is  based on \eqref{takis eikonal} and the properties of $g$, is about  an observation 
which plays a vital role in the proofs. 
\smallskip

\noindent To this end, for $x,y \in {\mathbb{R}} ^N$, $\lambda
>0$ and $\xi,\zeta \in \text{C}_0^1([0,T])$, 
we set
\begin{equation}  \label{eq:phi}
\Phi^{\lambda} (x,y,t):= \frac{\lambda e_g(x,y)}{1-\lambda (\xi _{t}-\zeta
_{t})}.
\end{equation}

\begin{lemma}
\label{lem:PHI} Assume \eqref{takis geometry}, \eqref{takis positive} and \eqref{asn:inj} and  choose $\lambda <1/\Delta^+ _{T}$. Then 
\begin{equation}  \label{eq:IneqPhiL}
\frac{\lambda e_g}{1+\lambda \Delta^-_{T}} \leq \Phi^\lambda \leq \frac{\lambda e_g}{1-\lambda
\Delta^+_{T}}  \ \text{ on } \  \R^N \times \R^N \times [0,T]. %
\end{equation}
In addition,  in the set $\{ (x,y)\in {\mathbb{R}} ^N \times {\mathbb{R}} ^N: d_g(x,y)
< {\Upsilon} \}$, $\Phi^{\lambda}$ is a classical solution of 
\begin{equation}
w_t =(g^{-1}(x)D_x w, D_x w) \dot \xi - (g^{-1}(y)D_y w, D_y w)\dot \zeta.  \label{eq:PDEPHI}
\end{equation}
\end{lemma}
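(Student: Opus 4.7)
The plan is to handle the two claims separately; both are essentially calculations that reduce to the Eikonal identity from Proposition~\ref{prop:eik} and the definitions of $\Delta_T^\pm$.

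For the bounds in \eqref{eq:IneqPhiL}, I would first observe that by the very definition of $\Delta_T^+, \Delta_T^-$ in \eqref{takis delta}, for every $t\in[0,T]$ we have
$$-\Delta_T^- \;\leq\; \xi_t-\zeta_t \;\leq\; \Delta_T^+.$$
With the standing restriction $\lambda < 1/\Delta_T^+$, the denominator in \eqref{eq:phi} satisfies
$$0 \;<\; 1-\lambda\Delta_T^+ \;\leq\; 1-\lambda(\xi_t-\zeta_t) \;\leq\; 1+\lambda\Delta_T^-.$$
Since $e_g \geq 0$, dividing $\lambda e_g$ by these quantities (and flipping the inequalities appropriately) yields \eqref{eq:IneqPhiL} immediately. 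This part is essentially bookkeeping.

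For the PDE \eqref{eq:PDEPHI}, I would write $D(t) := 1-\lambda(\xi_t-\zeta_t)$ so that $\Phi^\lambda = \lambda e_g(x,y)/D(t)$, and compute the time and space derivatives directly. The time derivative is
$$\partial_t \Phi^\lambda \;=\; \frac{\lambda^2 e_g(x,y)}{D(t)^2}\bigl(\dot\xi_t-\dot\zeta_t\bigr),$$
and the gradients are $D_x\Phi^\lambda = (\lambda/D(t))\,D_x e_g$ and $D_y\Phi^\lambda = (\lambda/D(t))\,D_y e_g$. The main step is then to invoke the Eikonal identities \eqref{takis eikonal} in the region $\{d_g(x,y)<\Upsilon\}$ (where Proposition~\ref{prop:eik} guarantees both $e_g\in C^1$ and the identities hold pointwise) to obtain
$$(g^{-1}(x)D_x\Phi^\lambda, D_x\Phi^\lambda) \;=\; \frac{\lambda^2}{D(t)^2}e_g(x,y), \qquad (g^{-1}(y)D_y\Phi^\lambda, D_y\Phi^\lambda) \;=\; \frac{\lambda^2}{D(t)^2}e_g(x,y).$$
Plugging these into the right-hand side of \eqref{eq:PDEPHI} yields exactly $\partial_t \Phi^\lambda$.

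There is no real obstacle here; the only subtle point is ensuring the computations are made in the region where $e_g$ is $C^1$, which is exactly what assumption \eqref{asn:inj} and the inclusion $\{d_g<\Upsilon\}\subset E$ from Proposition~\ref{prop:eik} provide. The positivity of $D(t)$ secured in the first part is what guarantees $\Phi^\lambda$ is smooth in $t$, so no regularization of the denominator is needed.
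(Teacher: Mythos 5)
Your proof is correct and follows the same route as the paper: the sandwich bound on the denominator $1-\lambda(\xi_t-\zeta_t)$ from the definition of $\Delta_T^\pm$, followed by a direct computation of $\partial_t\Phi^\lambda$ and the spatial gradients and substitution of the eikonal identities \eqref{takis eikonal} on the set $\{d_g<\Upsilon\}$. The paper treats the first inequality as immediate and writes out the same chain of equalities for the second claim, so there is no substantive difference.
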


\begin{proof}
The first inequality is immediate from the  definition \eqref{takis delta} of $\Delta_T^{%
\pm}$. To prove \eqref{eq:PDEPHI}, we  observe that, in view of Proposition \ref{prop:eik}, 
we have 
\begin{align*}
\Phi_t^{\lambda }& =\frac{\lambda ^{2}e_g(x,y)}{(1-\lambda (\xi
_{t}-\zeta _{t}))^{2}}(\dot{\xi}_{t}-\dot{\zeta}_{t}), \\
(g^{-1}(x)D_x \Phi, D_x \Phi) & =\frac{ \lambda ^{2} }{(1-\lambda (\xi _{t}-\zeta
_{t}))^{2}} (g^{-1}(x)D_x e_g, D_x e_g)\\ 
&=\frac{ \lambda ^{2}e_g(x,y)}{(1-\lambda (\xi _{t}-\zeta
_{t}))^{2}} \\
&=\frac{ \lambda ^{2} }{(1-\lambda (\xi _{t}-\zeta
_{t}))^{2}} (g^{-1}(y)D_y e_g, D_y e_g) \;= (g^{-1}(y)D_y \Phi, D_y \Phi),
\end{align*}
and, hence, whenever $d_g( x,y)<\Upsilon$, the claim follows.
\end{proof}

\noindent The proof of Theorem~\ref{thm:1storder} follows the standard procedure
of doubling variables. The key idea introduced in \cite{MR1647162} is
to use special solutions of the Hamiltonian part of the equation as test
functions in all the comparison type-arguments, instead of the typical $%
\lambda |x-y|^2$ used in the ``deterministic'' viscosity theory. 
As already pointed out earlier, in the case of general Hamiltonians,  the construction of the test 
functions in \cite{MR1647162} is tedious and requires structural conditions on $H$. 
The special form of the problem at hand, however, yields easily such  tests functions, which are provided by Lemma \ref{lem:PHI}. 
\smallskip

\noindent \textit{Proof of Theorem \ref{thm:1storder}}. 
To prove %
\eqref{eq:Compxizeta} it suffices to show that, for all $\lambda $ in a
left-neighborhood of $(\Delta^+ _{T})^{-1}$, that is for $\lambda \in
((\Delta^+ _{T})^{-1}-\epsilon ,(\Delta^+ _{T})^{-1})$ for some $\epsilon >0$, and $x,y \in \R^N$ and $t\in [0,T]$, 
\begin{align}
u(x,t)-v(y,t)& \leq \Phi ^{\lambda }(x,y,t)+\sup_{x^{\prime },y^{\prime
} \in \R^N}\left( u_{0}(x^{\prime })-v_{0}(y^{\prime })-\lambda e_g(x^{\prime
},y^{\prime })\right)   \label{uxminusvy} \\
& \leq \Phi ^{\lambda }(x,y,t)+\sup_{\R^N}  \left(u_0-v_0\right)+\sup_{x^{\prime },y^{\prime }\in \R^N}\left( v_{0}(x^{\prime })-v_{0}(y^{\prime
})-\lambda e_g(x^{\prime },y^{\prime })\right) .  \notag
\end{align}%
\noindent Indeed taking $x=y$ in \eqref{uxminusvy} we find%
\begin{eqnarray*}
u(x,t)-v(x,t) &\leq &\sup_{\R^N}  \left(u_0-v_0\right)+\sup_{x^{\prime
},y^{\prime } \in \R^N}\left( v_{0}(x^{\prime })-v_{0}(y^{\prime })-\lambda
d^{2}(x^{\prime },y^{\prime })/2\right) _{+} \\
&\leq &\sup_{\R^N}  \left(u_0-v_0\right)+\sup_{r\geq 0}\left( \omega
_{v_{0}}\left( r\right) -\lambda r^{2}/2\right) _{+} \\
&=&\sup_{\R^N}  \left(u_0-v_0\right)+\theta \left( \omega
_{v_{0}},\lambda \right),
\end{eqnarray*}%
and  we conclude letting $\lambda \rightarrow (\Delta^+ _{T})^{-1}$.


\noindent  We begin with the observation that, since constants are solutions of \eqref{takis hamilton jacobi}, 
\begin{equation}  \label{eq:Bnd-uv-1}
u \;\leq \; \|u_0\|_{\infty; {\mathbb{R}} ^N}  \ \text{and} \  - v \; \leq \;
\|v_0\|_{\infty; {\mathbb{R}} ^N}. 
\end{equation}


\noindent Next we fix $\delta, \alpha >0$ and  $0< \lambda <(\Delta^+ _{T})^{-1}$ 
and consider the map 
$$(x,y,t) \to u(x,t)-v(y,t) -\Phi ^{\lambda }(x,y,t) -\delta \left(|x|^2 + |y|^2)\right)- \alpha t,$$
which, in view of \eqref{eq:Bnd-uv-1}, achieves its
maximum at some $(\hat x, \hat y, \hat t) \in {\mathbb{R}} ^N \times {%
\mathbb{R}} ^N \times [0,T]$ --note that below to keep the notation simple
we omit the dependence of $(\hat x, \hat y, \hat t)$ on $\lambda ,\delta,\alpha$.

\noindent Let%
\begin{align*}
M_{\lambda,\alpha,\delta}& :=\max_{{\mathbb{R}}^{N}\times {\mathbb{R}^{N}
\times [0,T]}}u(x,t)-v(y,t)-\Phi ^{\lambda }(x,y,t)-\delta
\left(|x|^2 + |y|^2)\right) - \alpha t \\
& =u(\hat{x},\hat{t})-v(\hat{y},\hat{t})-\Phi^{\lambda }(\hat x, \hat
y, \hat t) -\delta \left(|\hat x|^2 + |\hat y|^2\right)
 -\alpha \hat{t}.
\end{align*}%

\noindent The lemma below summarizes a number of important properties of $(\hat x,
\hat y, \hat t)$. Since the arguments in the proof are classical in the theory of viscosity solutions, see for example \cite{Barlesbook}, \cite{CIL}, we omit the details.

\begin{lemma}
\label{lem:misc} Suppose that the assumptions of Theorem~\ref{thm:1storder}  hold. Then: 
\begin{equation}  \label{lem1}
\begin{cases}
\text{(i) \  for any fixed $\lambda, \alpha >0$,  \ $\lim_{\delta \rightarrow 0} 
\delta (|\hat{x}|^2+|\hat{y}|^2)=0,$} \\[1mm] 
\
\text{(ii) }  \ e_g(\hat{x},\hat{y})\leq 2({1/\lambda +\Delta _{T}^{-}})(\|u\|
_{\infty }+ \| v\|_{\infty }), \\[1mm] 

\text{(iii)} \  \mbox{if }d_g(\hat x, \hat y) \leq \Upsilon, \text{ then }\;\; \ \\[1mm]
(g^{-1}(\hat x) D_{x}\Phi^{\lambda}( \hat x, \hat y, \hat t), D_{x}\Phi^{\lambda}( \hat x, \hat y, \hat t)) +\\[1mm]
 (g^{-1}(\hat y) D_{y}\Phi^{\lambda}(\hat x, \hat y, \hat t), D_{y}\Phi^{\lambda}( \hat x, \hat y, \hat t))
\leq 2\lambda (1-\lambda
\Delta^+_{T})^{-1} (\|u\| _{\infty }+ \| v\|_{\infty }),  \\[1.2mm] 
\text{(iv)} \ \lim_{\delta \rightarrow 0}
M_{\lambda,\alpha,\delta}=M_{\lambda,\alpha,0}.%
\end{cases}%
\end{equation}
\end{lemma}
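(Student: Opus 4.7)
\medskip

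\noindent\textbf{Proof plan for Lemma~\ref{lem:misc}.} All four statements follow from the standard penalization setup combined with the explicit structure of $\Phi^{\lambda}$ established in Lemma~\ref{lem:PHI} and the a priori $L^{\infty}$-bounds \eqref{eq:Bnd-uv-1} on $u$ and $v$. I would handle them in the order (iv), (i), (ii), (iii), since (i) is essentially a corollary of (iv) and the last two share a common first step.

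For (iv), the map $\delta \mapsto M_{\lambda,\alpha,\delta}$ is clearly non-increasing, so $\limsup_{\delta \to 0}M_{\lambda,\alpha,\delta}\leq M_{\lambda,\alpha,0}$; for the reverse inequality I would pick a near-optimizer $(x_{0},y_{0},t_{0})$ for the $\delta=0$ sup and observe that $\delta(|x_{0}|^{2}+|y_{0}|^{2})\to 0$ as $\delta\to 0$, which gives $\liminf_{\delta\to 0}M_{\lambda,\alpha,\delta}\geq M_{\lambda,\alpha,0}$. Part (i) then drops out: since $(\hat x,\hat y,\hat t)$ is admissible in the $\delta=0$ sup,
\begin{equation*}
\delta\bigl(|\hat x|^{2}+|\hat y|^{2}\bigr)=\bigl[u(\hat x,\hat t)-v(\hat y,\hat t)-\Phi^{\lambda}(\hat x,\hat y,\hat t)-\alpha\hat t\bigr]-M_{\lambda,\alpha,\delta}\leq M_{\lambda,\alpha,0}-M_{\lambda,\alpha,\delta},
\end{equation*}
and the right-hand side vanishes by (iv).

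For (ii) and (iii) the crux is a uniform upper bound on $\Phi^{\lambda}(\hat x,\hat y,\hat t)$. Testing the maximum inequality at $(0,0,0)$, where both $\Phi^{\lambda}$ and the penalty vanish, yields $M_{\lambda,\alpha,\delta}\geq u_{0}(0)-v_{0}(0)$; combining with \eqref{eq:Bnd-uv-1} gives
\begin{equation*}
\Phi^{\lambda}(\hat x,\hat y,\hat t)\leq u(\hat x,\hat t)-v(\hat y,\hat t)-M_{\lambda,\alpha,\delta}\leq 2\bigl(\|u\|_{\infty}+\|v\|_{\infty}\bigr).
\end{equation*}
Feeding this into the lower bound $\lambda e_{g}/(1+\lambda\Delta_{T}^{-})\leq \Phi^{\lambda}$ from \eqref{eq:IneqPhiL} produces (ii). For (iii), the computation in the proof of Lemma~\ref{lem:PHI} rearranges, on $\{d_{g}(x,y)<\Upsilon\}$, to
\begin{equation*}
\bigl(g^{-1}(x)D_{x}\Phi^{\lambda},D_{x}\Phi^{\lambda}\bigr)+\bigl(g^{-1}(y)D_{y}\Phi^{\lambda},D_{y}\Phi^{\lambda}\bigr)=\frac{2\lambda\Phi^{\lambda}}{1-\lambda(\xi_{t}-\zeta_{t})};
\end{equation*}
bounding the denominator below by $1-\lambda\Delta_{T}^{+}>0$ and the numerator above by the preceding estimate on $\Phi^{\lambda}$ closes the argument.

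The only step that is more than pure book-keeping is the reverse inequality in (iv), and this is where I expect the small obstacle to lie: one must use the non-negativity of $\Phi^{\lambda}$ together with the $L^{\infty}$-control \eqref{eq:Bnd-uv-1} to ensure that the unattained $\delta=0$ supremum is well-approximated by its values at some fixed bounded point $(x_{0},y_{0},t_{0})$. Everything else is algebraic manipulation of the eikonal identity of Lemma~\ref{lem:PHI}, consistent with the authors' assessment that the argument is classical.
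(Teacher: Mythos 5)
The paper omits its own proof of Lemma~\ref{lem:misc}, citing it as classical, so I assess your proposal on its own merits. Your approach is the right one and parts (i), (ii), (iv) go through as you describe (modulo a minor slip: testing at $(0,0,0)$ gives $M_{\lambda,\alpha,\delta}\geq u(0,0)-v(0,0)$, not $u_0(0)-v_0(0)$; since $u(\cdot,0)\leq u_0$ and $v(\cdot,0)\geq v_0$ the inequality you wrote points the wrong way, but what you actually need, $M_{\lambda,\alpha,\delta}\geq -\|u\|_\infty-\|v\|_\infty$, is fine).

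There is, however, a genuine quantitative gap in (iii). Your bound $\Phi^{\lambda}(\hat x,\hat y,\hat t)\leq 2(\|u\|_\infty+\|v\|_\infty)$, plugged into the identity
$(g^{-1}D_x\Phi^\lambda,D_x\Phi^\lambda)+(g^{-1}D_y\Phi^\lambda,D_y\Phi^\lambda)=2\lambda\Phi^\lambda/(1-\lambda(\xi_{\hat t}-\zeta_{\hat t}))$,
yields $4\lambda(1-\lambda\Delta_T^+)^{-1}(\|u\|_\infty+\|v\|_\infty)$, which is twice the bound stated in the lemma. To recover the constant $2\lambda$ you must improve the bound on $\Phi^\lambda$ to $\Phi^{\lambda}(\hat x,\hat y,\hat t)\leq \|u\|_\infty+\|v\|_\infty$. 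The standard way is to test the maximality of $(\hat x,\hat y,\hat t)$ against the diagonal points $(\hat y,\hat y,\hat t)$ and $(\hat x,\hat x,\hat t)$: the first gives $\Phi^\lambda+\delta|\hat x|^2-\delta|\hat y|^2\leq u(\hat x,\hat t)-u(\hat y,\hat t)\leq 2\|u\|_\infty$, the second gives $\Phi^\lambda-\delta|\hat x|^2+\delta|\hat y|^2\leq v(\hat x,\hat t)-v(\hat y,\hat t)\leq 2\|v\|_\infty$, and averaging cancels the $\delta$-terms and yields $\Phi^\lambda\leq\|u\|_\infty+\|v\|_\infty$. This recovers (iii) exactly; note that (ii) as stated is consistent with either bound, which is why the discrepancy is easy to miss. (For the application in the proof of Theorem~\ref{thm:1storder} only uniform boundedness of $(g^{-1}D_x\Phi^\lambda,D_x\Phi^\lambda)$ in $\delta$ is needed, so your weaker constant would still carry the overall argument, but it does not prove the lemma as stated.)
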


\noindent 
Next we  argue
that, for any $\lambda $ in a sufficiently small left-neighborhood of $%
(\Delta^+ _{T})^{-1}$, we have $d_g(\hat{x},\hat{y})<\Upsilon$, which yields 
that the eikonal equation for $e$ are
valid at these points. 

\noindent In view of the bound on $d_g^{2}(\hat{x},\hat{y})=e_g(\hat{x},\hat{y})$ that follows from part (ii) of Lemma~\ref{lem:misc}, 
 it suffices to choose  
$\lambda $ so that 
\begin{equation*}
2({1/\lambda +\Delta _{T}^{-}})\left( \| u\| _{\infty
}+\| v\|_{\infty } \right) < \Upsilon^{2}.
\end{equation*}

\noindent Taking into account that we also need ${\Delta^+_{T}<1/\lambda }$, we are
led to the condition 
\begin{equation*}
{\Delta^+_{T}+\Delta _{T}^{-}<\frac{1}{\lambda} +\Delta _{T}^{-}}\leq \frac{1}{%
4\left( \| u\| _{\infty }+\| v\|_{\infty } 
\right) }\Upsilon^{2};
\end{equation*}%
and finding such $\lambda $ is possible in view of 
\eqref{AssumptionInjg}. 

\noindent If  $\hat{t}\in (0,T]$, we use the inequalities satisfied by $u$ and $v$ in the viscosity sense, noting that to simplify the  notation we omit the explicit
dependence of  derivatives of $\Phi$ on 
$(\hat x, \hat y, \hat t)$, and we find, in view of Lemma \ref{lem:PHI} 
and the  Cauchy-Schwarz's inequality, 
\begin{align*}
0 &\geq  \Phi_t^\lambda + \alpha - (g^{-1}(\hat x) (D_{ x}\Phi^\lambda + 2 \delta \hat x), (D_{ x}\Phi^\lambda + 2 \delta \hat x)) \dot{\xi}_{\hat t}
+ (g^{-1}(\hat y) (D_{ y}\Phi^\lambda - 2 \delta \hat y), (D_{ y}\Phi^\lambda - 2 \delta \hat y)) \dot{\zeta}_{\hat t}
\\
&\geq  \alpha - 
{\| \dot{\xi} \|}_{\infty;[0,T]} \left( 2 \delta (g^{-1}(\hat x) D_{ x} \Phi^{\lambda}, D_{ x} \Phi^\lambda)^{1/2} (g^{-1}(\hat x) \hat x, \hat x)^{1/2}
+ \delta^2 (g^{-1}(\hat x) \hat x, \hat x)   
\right) \\
& \;\;\;\;\;\;\;\;\;\;\;\;\;\;\;\;- {\| \dot{\zeta} \|} _{\infty;[0,T]}
\left( 2 \delta (g^{-1}(\hat y) D_{ x} \Phi^{\lambda}, D_{ y} \Phi^{\lambda})^{1/2} (g^{-1}(\hat y) \hat y, \hat y)^{1/2}
+ \delta^2 (g^{-1}(\hat y) \hat y, \hat y)  \right).
\end{align*}

\noindent Using  again Lemma \ref{lem:misc} (i)-(iii),   we can now let $\delta $ $\rightarrow $ $0$
to obtain 
$\alpha\leq 0$, which is a contradiction.

\noindent
It follows that, for all $\delta$ small enough, we must have  $\hat t = 0$ and, hence,
$$M_{\lambda,\alpha,\delta }\leq \left( u_{0}(%
\hat{x})-v_{0}(\hat{y})-\lambda e(\hat{x},\hat{y})\right)\leq \sup_{\R^N}\left( u_{0}-v_{0}\right) +\theta \left( \omega _{u_{0}} \wedge \omega _{v_{0}},\lambda\right).$$
\noindent Letting first $\delta \to 0$ and then $\alpha \to 0$, concludes the proof of \eqref{uxminusvy}.
\hfill$\qed$

\section{The second-order case: The proof of Theorem~\ref{thm:2ndorder}}

\indent Since the proof of Theorem~\ref{thm:2ndorder} is in many places very similar to that of 
Theorem \ref{thm:1storder}, we omit arguments that follow along straightforward modifications. 

\noindent In the next lemma we introduce the
modified test functions, which here will depend on an additional  parameter $%
\gamma$ corresponding to a time exponential. Since its proof is similar to the one of Lemma~\ref{lem:PHI}, we omit it.

\begin{lemma}
\label{lem:eikonalPDE2} Fix $T, \lambda >0, \gamma \geq 0,\,\xi ,\zeta \in
C^{1}([ 0,T]; {\mathbb{R}} ^N) $ with $\xi _{0}=\zeta _{0}=0$ and assume that
$\lambda \Delta_T^{\gamma;+} <1.$ Then
\begin{equation*}
\Phi ^{\lambda ,\gamma }(x,y,t):=\frac{\lambda e^{\gamma t}}{1-\lambda
\int_{0}^{t}e^{\gamma s}(\dot{\xi}_{s}-\dot{\zeta}_{s})ds}e_g(x,y)
\end{equation*}
is a classical solution, in $ \left\{(x,y) \in {\mathbb{R}} ^N\times {\mathbb{R}} ^N: d\left(
x,y\right) <\Upsilon\right\} \times [0,T]$, of 
\begin{equation*}
w_t - \gamma w- (g^{-1}(x) D_{x}w, D_{x}w)\dot{\xi} + (g^{-1}(y) D_{y}w, D_{y}w)\dot{\zeta}=0.
\end{equation*}
\end{lemma}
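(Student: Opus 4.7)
The plan is to follow the same pattern as the proof of Lemma~\ref{lem:PHI}, modified to absorb the exponential weight $e^{\gamma t}$ and the integral in the denominator. Write $\Phi^{\lambda,\gamma}(x,y,t) = A(t)\, e_g(x,y)$ with
\[
A(t) := \frac{\lambda\, e^{\gamma t}}{N(t)}, \qquad N(t) := 1 - \lambda \int_0^t e^{\gamma s}\bigl(\dot\xi_s - \dot\zeta_s\bigr)\,ds.
\]
First I would observe that the hypothesis $\lambda \Delta_T^{\gamma,+} < 1$ gives $N(t) \geq 1 - \lambda \Delta_T^{\gamma,+} > 0$ on $[0,T]$, so $A$ is a well-defined $C^1$ function of $t$. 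On the open set $\{d_g(x,y) < \Upsilon\}$ Proposition~\ref{prop:eik} gives $e_g \in C^1$ together with the eikonal identities $(g^{-1}(x) D_x e_g, D_x e_g) = (g^{-1}(y) D_y e_g, D_y e_g) = e_g(x,y)$, which is all the regularity of $e_g$ required.

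Next I would differentiate. Since $N'(t) = -\lambda e^{\gamma t}(\dot\xi_t - \dot\zeta_t)$, the quotient rule yields
\[
A'(t) = \gamma A(t) + \frac{\lambda^2 e^{2\gamma t}}{N(t)^2}\bigl(\dot\xi_t - \dot\zeta_t\bigr),
\]
so that
\[
\Phi^{\lambda,\gamma}_t - \gamma\, \Phi^{\lambda,\gamma} \;=\; \frac{\lambda^2 e^{2\gamma t}}{N(t)^2}\,e_g(x,y)\,\bigl(\dot\xi_t - \dot\zeta_t\bigr).
\]
On the other hand $D_x \Phi^{\lambda,\gamma} = A(t)\, D_x e_g$ and $D_y \Phi^{\lambda,\gamma} = A(t)\, D_y e_g$, so the eikonal identities give
\[
(g^{-1}(x) D_x \Phi^{\lambda,\gamma}, D_x \Phi^{\lambda,\gamma}) = A(t)^2\, e_g(x,y) = (g^{-1}(y) D_y \Phi^{\lambda,\gamma}, D_y \Phi^{\lambda,\gamma}).
\]
Multiplying these by $\dot\xi_t$ and $\dot\zeta_t$ respectively and subtracting matches exactly the right-hand side of the expression for $\Phi^{\lambda,\gamma}_t - \gamma\, \Phi^{\lambda,\gamma}$ displayed above, which establishes the stated PDE.

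There is no genuine obstacle here: the hypothesis $\lambda \Delta_T^{\gamma,+} < 1$ is precisely what keeps $N(t)$ positive and hence $A(t)$ smooth, while the fact that the same quadratic form $(g^{-1}\nabla e_g,\nabla e_g)$ appears on both sides of the eikonal equation makes the cross terms in $\dot\xi$ and $\dot\zeta$ combine cleanly. The only mild subtlety, as in Lemma~\ref{lem:PHI}, is to restrict to the region $\{d_g(x,y) < \Upsilon\}$ so that $e_g \in C^1$ and the eikonal identities may be invoked pointwise; everywhere else $\Phi^{\lambda,\gamma}$ need only be continuous, not classical.
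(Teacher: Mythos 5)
Your proof is correct and takes essentially the same approach the paper intends: the paper explicitly omits the proof, noting it is similar to that of Lemma~\ref{lem:PHI}, and your computation is precisely that argument with the extra factor $e^{\gamma t}$ absorbed into the time-dependent coefficient $A(t)$, the positivity of $N(t)$ guaranteed by $\lambda\Delta_T^{\gamma,+}<1$, and the eikonal identities from Proposition~\ref{prop:eik} supplying the cancellation of the $\dot\xi$ and $\dot\zeta$ terms.
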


\noindent  Next we specify the range of $\lambda$'s we will use. We set 
\begin{equation}\label{takis lambda}
\bar{\lambda} := (\Delta^{\gamma, +}_T)^{-1} \ \text{and} \  \underline{\lambda }:= \frac{4K}{\Upsilon - 4K\Delta^{\gamma,-}_T},
\end{equation}
and observe that, in view of our assumptions, we have  $\bar{\lambda} >  \underline{\lambda }.$
We say that $\lambda$ is admissible for fixed $\gamma$ and $\alpha$, if $\lambda \in (\underline{\lambda }, \bar{\lambda})$.


\noindent Also note that, if $u$, $v$, $u_{0},v_{0}, \xi$, $\zeta$ and $F$ are as in the statement of Theorem~\ref{thm:2ndorder}, then 
\begin{equation} \label{eq:Bnd-uv}
\sup_{\mathbb{R}^N \times [0,T]} (u - v) \leq K.
\end{equation}
%
\noindent  For fixed $\delta >0$ and  $ \lambda $  admissible 
we consider the map 
$$(x,y,t) \to u(x,t)-v(y,t) -\Phi ^{\lambda,\gamma }(x,y,t) -\delta \left(|x|^2 + |y|^2)\right),$$
which, in view of \eqref{eq:Bnd-uv-1}, achieves its
maximum at some $(\hat x, \hat y, \hat t) \in {\mathbb{R}} ^N \times {%
\mathbb{R}} ^N \times [0,T]$ --as before to keep the notation simple
we omit the dependence of $(\hat x, \hat y, \hat t)$ on $\lambda ,\delta$.

\noindent Let
\begin{eqnarray}
M_{\lambda ,\gamma ,\delta } &:=& \max_{\left( x,y, t\right) \in 
 {\mathbb{R}}^{N}\times {\mathbb{R}}^{N}\times [0,T]}u(x,t)-v(y,t)-\Phi
^{\lambda ,\gamma }(x,y,t)-\delta (|x|^{2}+|y|^{2})  \label{DefMlgd} \\
&= &u(\hat{x},\hat{t})-v(\hat{y},\hat{t})-\Phi ^{\lambda ,\gamma }(
\hat{x},\hat{y},\hat{t})-\delta (|\hat{x}|^{2}+|\hat{y}|^{2}). \notag
\end{eqnarray}%
%
\noindent The following claim is the analogue of  Lemma \ref%
{lem:misc}. As before when writing $\Phi$ and its derivatives we omit their arguments.
\begin{lemma}
\label{lem:miscnew}  Under the assumptions of Theorem~\ref{thm:2ndorder} and for  $\lambda$ admissible we have:%
\begin{equation*}
\begin{cases}
 \text{(i)} \  \lim_{\delta \rightarrow 0}\delta (|\hat{x}|^{2}+|\hat{y}|^{2})=0, \ \ \text{(ii)} \  e_g(\hat{x},\hat{y})\leq 2 K ( \frac{1}{\lambda}+\Delta^{\gamma,-}_T ), \  \ 
\text{(iii)} \  |D_{{x}}\Phi^{\lambda ,\gamma }|^{2}\leq 
\frac{2 \lambda e^{\gamma T}}{1-\lambda \Delta^{\gamma;+}_T} K \\
  \text{and} \ \ \text{(iv)}  \ \lim_{\delta \rightarrow 0}M_{\lambda ,\gamma ,\delta
}=M_{\lambda ,\gamma ,0}.
\end{cases}
\end{equation*}
\end{lemma}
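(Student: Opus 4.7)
The proof will follow the blueprint of Lemma~\ref{lem:misc}, with two modifications accounting for the new ingredients in the second-order setting: the exponential time-weight $e^{\gamma t}$ now present in $\Phi^{\lambda,\gamma}$, and the replacement of the naive sup-norm bound on $u-v$ by the stronger inequality \eqref{eq:Bnd-uv}, which relies on the monotonicity assumption \eqref{monotone}. Admissibility of $\lambda$ in the sense of \eqref{takis lambda} will be invoked at the end to guarantee that the maximizer lies in the region where Proposition~\ref{prop:eik} applies.

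For parts $(i)$ and $(iv)$, I appeal to the standard penalization convergence argument of the deterministic theory. Since $\Phi^{\lambda,\gamma}\ge 0$ and $u,v$ are bounded, the map $\delta\mapsto M_{\lambda,\gamma,\delta}$ is nondecreasing as $\delta\downarrow 0$ and uniformly bounded, hence convergent. Evaluating the $\delta/2$-penalized functional at the maximizer of the $\delta$-penalized one produces the inequality $M_{\lambda,\gamma,\delta/2}\ge M_{\lambda,\gamma,\delta}+(\delta/2)(|\hat x|^2+|\hat y|^2)$, which forces $\delta(|\hat x|^2+|\hat y|^2)\to 0$ and yields $(i)$; the upper- and lower-semicontinuity of $u$ and $v$ then identify the common limit with $M_{\lambda,\gamma,0}$, giving $(iv)$. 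For $(ii)$, I will drop the non-negative quadratic penalty from the definition of $M_{\lambda,\gamma,\delta}$ and use \eqref{eq:Bnd-uv} to write $\Phi^{\lambda,\gamma}(\hat x,\hat y,\hat t)\le u(\hat x,\hat t)-v(\hat y,\hat t)\le K$. Since $e^{\gamma\hat t}\ge 1$ and the denominator of $\Phi^{\lambda,\gamma}$ is at most $1+\lambda\Delta_T^{\gamma,-}$, this translates into $\Phi^{\lambda,\gamma}(\hat x,\hat y,\hat t)\ge\lambda e_g(\hat x,\hat y)/(1+\lambda\Delta_T^{\gamma,-})$, and rearranging gives the bound on $e_g(\hat x,\hat y)$.

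For $(iii)$, direct differentiation of the quotient defining $\Phi^{\lambda,\gamma}$ yields $D_x\Phi^{\lambda,\gamma}=\tfrac{\lambda e^{\gamma t}}{1-\lambda\int_0^{t}e^{\gamma s}(\dot\xi_s-\dot\zeta_s)ds}\,D_xe_g(x,y)$, and admissibility of $\lambda$ together with $(ii)$ ensures that $d_g(\hat x,\hat y)<\Upsilon$, so Proposition~\ref{prop:eik} supplies the eikonal identity $(g^{-1}(\hat x)D_xe_g,D_xe_g)=e_g(\hat x,\hat y)$. Substituting this identity, converting from the $g$-norm to the Euclidean norm via the equivalence in \eqref{takis positive}, bounding the denominator below by $1-\lambda\Delta_T^{\gamma,+}$ and the numerator above by $e^{\gamma T}$, and finally invoking $\Phi^{\lambda,\gamma}(\hat x,\hat y,\hat t)\le K$, leads to the claimed inequality. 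The only conceptual subtlety is confirming that the admissibility window $(\underline\lambda,\bar\lambda)$ from \eqref{takis lambda} is calibrated so that the bound in $(ii)$ yields $d_g(\hat x,\hat y)<\Upsilon$: the choice $\lambda>\underline\lambda=4K/(\Upsilon-4K\Delta_T^{\gamma,-})$ is tuned precisely for this, and non-emptiness of $(\underline\lambda,\bar\lambda)$ is exactly the content of the hypothesis $\gamma\in\Gamma$. Once this is unpacked the remainder is bookkeeping of the same sort already handled in Lemma~\ref{lem:misc}.
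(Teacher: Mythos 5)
Your overall blueprint is the right one: the paper omits the proof of this lemma and explicitly appeals to the analogy with Lemma~\ref{lem:misc}, which you reproduce. Parts (i) and (iv) are handled correctly by the standard penalization argument, and your reading of how the admissibility window $(\underline\lambda,\bar\lambda)$ from \eqref{takis lambda} is tuned to force $d_g(\hat x,\hat y)<\Upsilon$ via (ii) is exactly the intended one.

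The derivation of (ii), however, has a genuine gap, and it propagates into the $\Phi^{\lambda,\gamma}\le K$ input you use for (iii). First, the inequality \eqref{eq:Bnd-uv} controls $u(x,t)-v(x,t)$ at \emph{equal} spatial arguments, whereas at the maximizer $\hat x\neq\hat y$ in general; the fact that $u(\hat x,\hat t)-v(\hat y,\hat t)\le K$ is still true, but it comes from separate $L^\infty$ bounds on $u$ and on $-v$ (each obtained by comparison with a constant super/subsolution, which is where \eqref{monotone} enters), not from \eqref{eq:Bnd-uv}. Second, and more seriously, dropping the penalty gives $\Phi^{\lambda,\gamma}(\hat x,\hat y,\hat t)=u(\hat x,\hat t)-v(\hat y,\hat t)-M_{\lambda,\gamma,\delta}-\delta(|\hat x|^2+|\hat y|^2)$, so the step $\Phi^{\lambda,\gamma}\le u(\hat x,\hat t)-v(\hat y,\hat t)$ tacitly assumes $M_{\lambda,\gamma,\delta}\ge 0$, which can fail (take $u_0<v_0$ everywhere). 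The correct bookkeeping, mirroring the proof of Lemma~\ref{lem:misc}(ii), also supplies the lower bound $M_{\lambda,\gamma,\delta}\ge u(0,0)-v(0,0)\ge -\|u_0\|_\infty-\|v_0\|_\infty\ge -K$, which yields $\Phi^{\lambda,\gamma}(\hat x,\hat y,\hat t)\le 2K$, not $K$. That factor $2$ is precisely the one appearing in the stated bounds (ii) and (iii); your argument would give the strictly sharper $e_g(\hat x,\hat y)\le K(1/\lambda+\Delta_T^{\gamma,-})$, which the reasoning does not actually support.
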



\noindent \textit{Proof of Theorem \ref{thm:2ndorder}}. If, for some sequence  $\delta \to 0$, 
$\hat{t}=0$, then 
\begin{equation}\label{Estimate_when_t_hat_zero}
M_{\lambda ,\gamma ,0}=\lim_{\delta \to 0} M_{\lambda ,\gamma ,\delta}
\leq u_{0}(\hat{x})-v_{0}(\hat{y})-\Phi ^{\lambda ,\gamma }(\hat{x}, \hat{y},0)
\leq \| \left( u_{0}-v_{0}\right) _{+}\| _{\infty}+\theta ( \omega _{u_{0}}\wedge \omega _{v_{0}},\lambda) .
\end{equation}
\noindent We now treat the case where $\hat{t}\in (0,T]$ for all $\delta$ small enough.
%
%
%

\noindent  Since, in view of  Lemma~\ref{lem:miscnew}(ii) and the assumptions (recalling that $\lambda$ is admissible), the test-function $\Phi^{\lambda, \gamma}$ is smooth at $(\hat x, \hat y, \hat t )$, it follows from the theory of viscosity solutions (see, for example, \cite{CIL})  that
\begin{eqnarray}\label{eq:visc}
0 \ge {\Phi}^{\lambda, \gamma}_t-F(X+2\delta I, D_{x}{\Phi}^{\lambda, \gamma}
+2\delta \hat{x}, u(\hat{x},\hat{t}), \hat x, \hat t)-(g^{-1}(\hat{x})(D_{{x}}{\Phi}^{\lambda, \gamma}
+2\delta \hat{x}), D_{x}{\Phi}^{\lambda, \gamma}
+2\delta \hat{x}) \dot{\xi}_{\hat{t}} &&  \notag \\
+F(Y-2\delta I, -D_{y}{\Phi}^{\lambda, \gamma}
-2\delta \hat{y}, v(\hat{y}, \hat t), \hat y, \hat t) +
(g^{-1}(\hat{y})(D_{y}{\Phi}^{\lambda, \gamma}
+2\delta \hat{y}), D_{y}{\Phi}^{\lambda, \gamma}
+2\delta \hat{y})\dot \zeta_{\hat t},
%
\end{eqnarray}%
where $X, Y \in \mathcal {S}^N$ are such that 
%
%
for a given $\varepsilon >0$,
\begin{equation}  \label{eq:XY}
-\left( \frac{\hat{\alpha}^2}{\varepsilon}+\hat{\alpha}|D^{2}e_g(\hat{x},%
\hat{y})|\right) \left( 
\begin{array}{cc}
I\; & 0 \\ 
0\; & I%
\end{array}%
\right) \leq \;\;\left( 
\begin{array}{cc}
X & 0 \\ 
0 & -Y%
\end{array}%
\right) \leq \hat{\alpha}D^{2}e_g(\hat{x},\hat{y})+\varepsilon%
(D^{2}e_g(\hat{x},\hat{y}))^{2}
\end{equation}%
%
%
and 
\begin{equation}
\hat{\alpha}:=\frac{\lambda e^{\gamma \hat t}}{1-\lambda \int_{0}^{\hat{t}}e^{\gamma s}(\dot{\xi}_{s}-\dot{\zeta}_{s})ds}=\frac{{\Phi}^{\lambda, \gamma}(\hat x, \hat y, \hat t)}{e_g( \hat{x},\hat{y})}.
\label{alpha}
\end{equation}%
\smallskip

\noindent Then, as in the usual proof of the comparison of viscosity solutions, combining \eqref{eq:visc} and
\eqref{monotone}, we get that 
\begin{equation} \label{ takis vscosity}
\rho (u(\hat x, \hat{t})-v(\hat{y}, \hat{t}))_+ \leq (a)+(b)+(c)+(d),
\end{equation}
where 
\begin{equation}\label{takis a}
(a):=-F(X,  D_{x}\Phi^{\lambda, \gamma}, u(
\hat{x}, \hat{t}),  \hat x, \hat{t})+F(X+2\delta, D_{x}\Phi^{\lambda, \gamma}+2\delta \hat{x}, u(\hat{x}, \hat{t}), \hat x, \hat t),
\end{equation}
\begin{equation}\label{takis b}
(b):=F(Y, - D_{y}\Phi^{\lambda, \gamma}, v(
\hat y, \hat{t}),  \hat x, \hat{t})-F(Y+2\delta
I, - D_{y}\Phi^{\lambda, \gamma} -2\delta \hat{y}, v(
\hat y, \hat{t}),  \hat x, \hat{t}),
\end{equation}
\begin{equation}\label{takis c}
(c):= \begin{cases}
\Phi^{\lambda, \gamma}_t+\gamma \Phi^{\lambda, \gamma}+(g^{-1}(\hat x) (D_{{x}}{
\Phi}^{\lambda, \gamma} +2\delta \hat{x}), D_{{x}}{%
\Phi}^{\lambda, \gamma} +2\delta \hat{x})\dot{\xi}_{\hat {t}}\\[1mm]
- (g^{-1}(\hat y) (D_{{y}}{%
\Phi}^{\lambda, \gamma}-2\delta \hat{y}), D_{{y}}{%
\Phi}^{\lambda, \gamma}-2\delta \hat{y})\dot \zeta_{\hat t},
\end{cases}
\end{equation}
and
\begin{equation}\label{takis s}
(d):=-\gamma \Phi^{\lambda, \gamma}+F( X,  D_{x}\Phi^{\lambda, \gamma}, u(
\hat{x}, \hat{t}),  \hat x, \hat{t})
-F(Y, - D_{y}\Phi^{\lambda, \gamma}, v(
\hat y, \hat{t}), \hat y, \hat t).
\end{equation}
\noindent Since %
\eqref{Ad} and  \eqref{eq:XY} imply that $X$ and $Y$ stay bounded, in view of \eqref{uniform}, 
we get $\limsup_{\delta \to 0}((a)+(b))=0$. 
\smallskip

\noindent Moreover, the quadratic form of the equation satisfied by 
$\Phi^{\lambda, \gamma} =\hat \alpha e_g$ gives
\begin{equation*}
(c) \leq C \delta |D_{\hat x}\Phi^{\lambda, \gamma} | ( |\hat x| + |\hat
y|) \left( {\| \dot{\xi} \|}_{\infty;[0,T]} + {\| \dot{\zeta} \|}%
_{\infty;[0,T]}\right),
\end{equation*}
and using Lemma \ref{lem:miscnew} (i),(iii) we find
$\lim_{\delta \rightarrow 0}(c)=0$. 

\noindent For the last
term, note that Lemma \ref{lem:miscnew} (ii) and \eqref{thelemma} yield, always at the point $(\hat x, \hat y, \hat t)$,
%
%
\begin{eqnarray*}\label{takis d}
(d) &= &-\gamma \hat{\alpha}{e_g}+ F( X,  \hat \alpha D_{x} e_g, u(
\hat{x}, \hat{t}),  \hat x, \hat{t})
-F(Y, -\hat \alpha D_{y} e_g, v(\hat y, \hat{t}), \hat y, \hat t)\\
& \leq & -\frac{\gamma \hat{\alpha}}{2}d^{2}( \hat{x},\hat{y})
+\omega _{F, K}\left( d\left( \hat{x},\hat{y}\right) +\hat{\alpha}d^{2}\left( 
\hat{x},\hat{y}\right) + \varepsilon \right) \\
&\leq &-\frac{\gamma \hat{\alpha}}{2}d^{2}\left( \hat{x},\hat{y}\right)
+\omega _{F, K}(\hat{\alpha}d^{2}( \hat{x},\hat{y})) +
\omega_{F,K} (d( \hat{x},\hat{y})  + \omega _{F, K}\left( \varepsilon \right)\\
&\leq &\tilde{\theta}\left( \omega _{F,K},\gamma \right) + \omega_{F, K} (2 
(K (\frac{1}{\lambda} + \Delta^{\gamma;-})^{1/2}) + \omega _{F, K}\left( \varepsilon \right).
\end{eqnarray*}
%
%

\noindent Combining the last four estimates and \eqref{Estimate_when_t_hat_zero}  
and letting $\varepsilon \to 0$
 we find 
that, for all $\lambda \in (\underline{\lambda},\bar{\lambda})$ 
\begin{eqnarray*}
u(x,t)-v(x, t) &\leq & M_{\lambda ,\gamma ,0}= \lim_{\delta
\to 0}M_{\lambda ,\gamma ,\delta } \\
&\leq & \left\Vert \left( u_{0}-v_{0}\right) _{+}\right\Vert _{\infty
}+ \theta\left( \omega _{u_{0}}\wedge \omega _{v_{0}},\lambda \right) + 
\frac{1}{\rho }\tilde{\theta}\left( \omega _{F,K},\gamma \right) + \frac{1}{\rho} \omega_{F,K}
(2 (K (\frac{1}{\lambda} + \Delta^{\gamma;-}_T)^{1/2}) .
\end{eqnarray*}%

\noindent Letting $\lambda \to \bar \lambda$ 
and using the continuity of $\theta$ in the last argument,  we finally obtain that, for all $\gamma \in \Gamma $,
\begin{eqnarray*}
u-v &\leq &\left\Vert \left( u_{0}-v_{0}\right) _{+}\right\Vert _{\infty
}+\theta \left( \omega _{u_{0}}\wedge \omega _{v_{0}},
(\Delta^{\gamma;+}_T)^{-1} \right) +\frac{1}{\rho }\tilde{\theta} \left(
\omega_{F,K}; \gamma \right) +  \frac{1}{\rho} \omega_{F,K}(2 (K (\frac{1}{\lambda} + \Delta^{\gamma;-}_T)^{1/2}).
\end{eqnarray*}
\hfill\qed

\section{The properties of the geodesic energy and the assumptions of Theorem~\ref{thm:2ndorder}}

\noindent In this section we prove that $C^2_b$-bounds on $g$ and $g^{-1}$
imply \eqref{Ad} 
 and verify that the $F$ in \eqref{F_generic}, if \eqref{takis sigma1} and \eqref{takis sigma2} hold, satisfies the assumptions of Theorem \ref{thm:2ndorder}.

\noindent We begin with the former. 
\begin{proposition} \label{prop:g} 
Assume \eqref{takis bound}. 
Then there exists $\Upsilon >0$ such that, in the set $\left\{ (x,y): d_{g}(x,y)<\Upsilon \right\}$, 
$e_g$ is twice continuously differentiable and  \eqref{Ad} is satisfied with bounds
depending only on appropriate norms of $g,g^{-1}$.
\end{proposition}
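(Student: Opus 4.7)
The plan is to construct a uniform injectivity radius from the $C^2_b$ bounds on $g$ and $g^{-1}$, then exploit the explicit formula $e_g(x,y) = |\exp_x^{-1}(y)|_{g(x)}^2$ inside that radius to deduce both $C^2$-regularity and uniform bounds on $D^2 e_g$.

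First I would recall that in global coordinates on $\R^N$ the geodesic equation reads $\ddot\gamma^k + \Gamma^k_{ij}(\gamma)\dot\gamma^i\dot\gamma^j = 0$, where the Christoffel symbols $\Gamma^k_{ij}$ are universal polynomial expressions in $g^{-1}$ and $\partial g$. Under \eqref{takis bound} they lie in $C^1_b(\R^N)$, with norms controlled only by $\|g\|_{C^2_b}$ and $\|g^{-1}\|_{C^2_b}$. Classical ODE theory with parameters then yields, for each $x\in\R^N$ and $v\in T_x\R^N\cong\R^N$ with $|v|$ small enough (uniformly in $x$), a unique geodesic $\gamma(\cdot;x,v)\in C^2([0,1];\R^N)$, and hence a $C^2$ exponential map $\exp_x(v) := \gamma(1;x,v)$ defined on a ball $B_{r_0}\subset T_x\R^N$ with $r_0$ depending only on the above norms. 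Grönwall-type estimates on the variational equation give $d\exp_x|_0 = \mathrm{Id}$ and uniform $C^2$ bounds on $\exp_x$, with a derivative that stays uniformly close to the identity on a possibly smaller ball $B_{r_1}$.

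The quantitative inverse function theorem then makes $\exp_x\colon B_{r_1}\to\exp_x(B_{r_1})$ a $C^2$-diffeomorphism for every $x$, with a $C^2$ inverse whose derivatives are uniformly bounded; jointly in $(x,y)$ this uses smooth dependence of ODEs on initial data. Using \eqref{takis positive} to switch between $d_g$ and the Euclidean distance, I choose $\Upsilon>0$ small enough so that any minimizing geodesic connecting $x$ and $y$ with $d_g(x,y)<\Upsilon$ has $g$-length bounded by $r_1/2$, and therefore coincides with the unique geodesic $t\mapsto \exp_x(tv)$ for $v=\exp_x^{-1}(y)$. On the open set $\{d_g(x,y)<\Upsilon\}$, the Gauss lemma gives the clean identity
\[
e_g(x,y) \;=\; \bigl|\exp_x^{-1}(y)\bigr|_{g(x)}^2,
\]
which, combined with the joint $C^2$-regularity of $(x,y)\mapsto \exp_x^{-1}(y)$, shows at once that $e_g\in C^2$ on this set.

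The main obstacle is the quantitative $C^2$-bound. I would extract it by differentiating the identity $e_g(x,\exp_x(v)) = |v|_{g(x)}^2$ twice in $v$ and then inverting through the uniformly non-degenerate Jacobian of $\exp_x$: this expresses $D^2_{yy}e_g$, and by symmetry $D^2_{xx}e_g$ and $D^2_{xy}e_g$, in terms of the inverse of the Jacobi operator along the minimizing geodesic. Because the Riemann curvature tensor is a polynomial in $g^{-1}$ and $\partial g, \partial^2 g$, it is bounded uniformly under \eqref{takis bound}, and the Jacobi equation comparison (performed in Riemannian normal coordinates, where $g_{ij}(0)=\delta_{ij}$ and $\partial_k g_{ij}(0)=0$, and then transferred back via the uniform equivalence with the Euclidean metric) delivers an $L^\infty$-bound on $D^2 e_g$ on $\{d_g(x,y)<\Upsilon\}$ depending only on $\|g\|_{C^2_b}$ and $\|g^{-1}\|_{C^2_b}$. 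This is exactly \eqref{Ad}, completing the proof.
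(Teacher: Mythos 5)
Your overall plan (uniform estimates on geodesics from $C^2_b$ bounds, exponential map, quantitative inverse function theorem to get a uniform injectivity radius) is the same strategy as the paper, and most of it is sound. However, there is a genuine regularity gap in the step where you obtain $C^2$-regularity and the bound on $D^2 e_g$. You propose to differentiate the identity $e_g(x,\exp_x(v))=\tfrac14|v|_{g(x)}^2$ twice in $v$ and then invert through the Jacobian of $\exp_x$. Carried out literally, this produces a term involving $D^2_y\exp_x^{-1}(y)$, hence the \emph{second} derivative of the exponential map, which requires the geodesic ODE to have $C^2$ right-hand side, i.e. $g\in C^3$ — one order more than \eqref{takis bound} grants. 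The flow of the Hamiltonian system \eqref{eq:HJ} (equivalently, the Lagrangian geodesic flow) is only $C^1$ in $(x,p)$ when $g,g^{-1}\in C^2$.

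The paper avoids this via the identity $D_x e_g(x,y)=-p_0=-E_x^{-1}(y)$, where $p_0$ is the initial co-velocity, and by symmetry $D_y e_g=p_1$. These are objects produced \emph{directly} by the flow, not by differentiating the scalar function $e_g$, and they inherit $C^1$-dependence on $(x,y)$ from the $C^1$-flow alone. Differentiating these once gives all second derivatives of $e_g$ (the paper supplements this with the relation \eqref{p1minusp0}, $D_y e_g+D_x e_g=-\int_0^1 H_x(X_s,P_s)\,ds$, to control $D^2_{xx}e_g$), and the uniform bound \eqref{Ad} then follows from the uniform non-degeneracy of $D_pE_x\approx 2g^{-1}(x)$, which is exactly the content of the tangent flow estimate. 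So the missing idea in your write-up is that the first derivative of $e_g$ already equals a component of the flow, so that only one derivative of the flow is needed, not two.

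Two smaller points: with the paper's normalization \eqref{eq:DefE} you have $e_g(x,y)=\tfrac14|\exp_x^{-1}(y)|^2_{g(x)}$, not $|\exp_x^{-1}(y)|^2_{g(x)}$; and the identity you cite is not the Gauss lemma but simply the statement that along the (constant-speed) minimizing geodesic the energy equals $\tfrac14|\dot\gamma_0|^2_{g}$, which is elementary (cf.\ \eqref{gamma_0squ}). These do not affect the structure but should be fixed.
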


\begin{proof}
We begin by recalling  some basic facts concerning geodesics and distances. 

\noindent For each fixed point $%
x$, there is a unique geodesic with starting velocity $v=\dot{\gamma}\left(
0\right) $ given by  $\gamma _{t}= X_{t}$,  where $\left( X,P\right)_{t\geq 0} $
is the solution to the characteristic equations
\begin{equation}\label{eq:HJ}
\begin{cases}
\dot{X}_{s}= 2g^{-1}(X_s) P_s \;\;\;\;\;X_{0}=x, \\ 
\dot{P}_{s}=- (Dg^{-1}(X_s)P_s,P_s)\;\;\;\;\;P_{0}=p= \tfrac{1}{2} g(x)v.
%
\end{cases}
\end{equation}

\noindent Equivalently $(\gamma)_{t \geq 0} $ satisfies the second order system of ode %
\begin{equation}
\ddot{\gamma}_{t}+\Gamma _{ij}^{k}\left( \gamma _{t}\right) \dot{\gamma}%
_{t}^{i}\dot{\gamma}_{t}^{j}=0 \ \ \  \gamma _{0}=x, \ \dot{\gamma}_{0}=v \label{geoODE}
\end{equation}%
with 
\begin{equation}   \label{Chris}
\Gamma _{ij}^{k}:=g^{k\ell }\left( \partial _{i}g_{\ell
j}+\partial _{j}g_{\ell i}-\partial _{\ell }g_{ij}\right) \text{.}
\end{equation}
\noindent It is easy to see that \eqref{eq:HJ}  has a global solution $\left( X,P\right)_{t\geq 0}$, since, in view of  \eqref{takis positive} and \eqref{takis bound} as well as  the invariance of the flow, 
we have, for $t\geq 0$, 
\begin{equation}    \label{Psmall_if_p0small}
| P_t |
\approx  (g^{-1}(X_t)P_t, P_t)  =  (g^{-1}(X_0)P_0, P_0)  \approx |p|^2.  
\end{equation}%
%

\noindent As a consequence, the projected end-point map
$ E_{x}\left( p\right) :=X_{1}( x, p) $ 
is well-defined for any $p$. 
%
\smallskip

\noindent We note that the energy along a geodesic $\gamma$
emerging from $\gamma_0 = x$ has a simple expression in terms of $p=P_0$ or $v=%
\dot{\gamma}_{0}$. Indeed, invariance of the Hamilonian $H(x,p) = (g^{-1}(x)p,p)$ under the flow yields 
\begin{equation} \label{gamma_0squ}
|\dot{\gamma}_{0}|^2_g := ( g\left( x\right)
v,v)  =  4  \, H(x,p) 
=  4 \int_{0}^{1}  H(X_t,P_t) dt= 
\int_{0}^{1}(g\left( \gamma _{t}\right) \dot{\gamma}%
_{t},\dot{\gamma}_{t}) dt.
\end{equation}%

\noindent  
It is a basic fact that distance minimizing curves (geodesics)
are also energy minimizing.
\noindent Indeed, given $x,y$, (\ref{eq:HJ}) and equivalently (\ref{geoODE}), are the
first-order optimality necessary conditions for these minimization problems. Hence, in view of (\ref{gamma_0squ}),
\begin{align}
e_{g}\left( x,y\right) & =   \inf \left\{ \frac{1}{4}\left\vert \dot{\gamma}%
_{0}\right\vert _{g}^{2}:\gamma \text{ satisfies (\ref{geoODE}) } \text{ with} \  \gamma
_{0}=x \text{ and } \gamma _{1}=y\right\}  \nonumber \\
& =   \inf \left\{  H(X_0,P_0) : (X,P) \text{ satisfies (\ref{eq:HJ}) } \text{ with} \  
X_{0}=x \text{ and } X_{1}=y\right\} . \nonumber
\end{align}%


\noindent A standard compactness argument implies the existence of at least one geodesic connecting two given points $x,y$. In general, however, more than one geodesic from $x$ to $y$ may exist, each determined by its initial velocity $%
\dot{\gamma}_{0}=v$ or equivalently $P_0 = p = \frac{1}{2} g(x) v$, upon departure from $x$. 

\noindent  It turns out that, for $y$ close to $x$, there exists
exactly one geodesic. 
 Indeed, if $g^{-1} \in C^2$, it is clear from (\ref{eq:HJ}) that $E_x = (p \mapsto X_1(x,p))$ has $C^1$-dependence in $p$. Since $D_{p} E_{x} (p)$ is non-degenerate in a
neighborhood of $p=0$, 
it follows from the inverse function theorem that, for $y$ close enough to $x$, 
one can solve $y=E_x(p)$ uniquely for $p=E_{x}^{-1}(y)$ with $C^1$-dependence in $y$. Hence\begin{equation}
e_g (x,y) = H (x, p) = H(x, E_{x}^{-1}(y)). \label{energyHamiltonian}
\end{equation}
The gradient $D_{x}e_g\left( x,y\right) $
points in the direction of maximal increase of $x\mapsto e\left( x,y\right) $.  Since $E_{x}^{-1}\left( y\right) =p$ is precisely the co-velocity of $%
X$ at $X_{0}=x$ and $X$ is the geodesic from $x$ to $X_{1}=y$, it follows that 
\begin{equation}
\label{eq:DxE}
D_{x}e_g\left( x,y\right) =-E_{x}^{-1}y=-p.
\end{equation}%
This easily implies that, for points $x,y$ close enough, the energy has continuous second derivatives. Indeed, existence of continuous mixed derivatives $D^2_{xy} e_g$ follows immediately from the $C^1$-regularity of $E_{x}^{-1}=E_{x}^{-1}(y)$. Concerning $D^2_{xx} e_g$ (and by symmetry $D^2_{yy}e_g$) we set $p=p_0$ above and note that
by exchanging the roles of $x,y$, we have $D_{y}e_g\left( x,y\right) =-E_{y}^{-1}\left(
x\right) =P_{1}\left( x,p_{0}\right) =:p_{1}$ and so, from (\ref{eq:HJ}),
\begin{equation} \label{p1minusp0}
D_{y}e_g\left( x,y\right) +D_{x}e_g\left( x,y\right)  = p_{1}-p_{0}=-\int_{0}^{1}H_{x}\left( X_{s},P_{s}\right) ds.
\end{equation}%
The existence and continuity of $D^2_{xx} e_g$ is then clear, since the right-hand side above has $C^1$-dependence in $x$ as is immediate from (\ref{eq:HJ}) and $g^{-1} \in C^2$.

\noindent  Since all the above considerations have been so far local, it is necessary to address the (global)
question of regularity in a strip around the diagonal $\{ x = y \}$. 
For this  we need to control $\alpha_1:=D_p E_x (p) = D_p X_1 (x,p)$. We do this by considering  the tangent flow
$$
   (\alpha_t, \beta_t) := (D_p X_t(x,p),D_p P_t(x,p)),
$$   
which solves the matrix-valued linear ode
\begin{equation} \label{eq:Jac}
\begin{cases}
\dot{\alpha}_{s}=2Dg^{-1}(X_s) P_s \alpha_{s}+ 2g^{-1}(X_s)\beta _{s},\;\;\;\;\;\alpha
_{0}=0, \\[1mm] 
\dot{\beta}_{s}=- (D^2g^{-1}(X_s)P_s, P_s) \alpha
_{s}-  2 \, Dg^{-1}(X_s) P_s   \beta _{s},\;\;\;\;\;\beta
_{0}=I  .%
\end{cases}
\end{equation}%
We now argue that, uniformly in $x$, 
$$D_p E_x (p) = \alpha_1 \approx  2 \, g^{-1} (x).$$ 
It follows that $D_p E_x$ is non-degenerate, again
uniformly in $x$. Indeed if $\alpha_0 = 0$, whenever $p$ is small, $X_\cdot \approx x, \beta_\cdot \approx I$ and we have
\[
\left( 
\begin{array}{c}
\dot{\alpha} \\ 
\dot{\beta}%
\end{array}%
\right) =\left( 
\begin{array}{ll}
\text{(small)} & 2 g^{-1} (X_\cdot)  \\ 
\text{(small)} & \, \, \text{(small)}%
\end{array}%
\right) \left( 
\begin{array}{c}
\alpha  \\ 
\beta 
\end{array}%
\right) \, .
\]
\noindent  Next we prove the above claim. First, it follows from (\ref{Psmall_if_p0small}) that if $p$ is small, then $P_t(x,p)$ stays small, over, for example,  a unit time interval $[0,1]$. Moreover, since   
$\dot{X} =  2  \, g^{-1} (X) P$, the  boundedness of $g^{-1}$ yields that the path $X_\cdot(x, p)$ also stays close to $X_0 = x$, again uniformly on $[0,1]$.
Furthermore, the $C^1$- and  $C^2$-bounds on $g^{-1}$ yield that the matrices  $Dg^{-1}(X_s) P_s$ and  $ (D^2g^{-1}(X_s)P_s, P_s)$ are small  along $(X_{s},P_{s})$, while $2g^{-1}(X_s)$ is plainly 
bounded. %
\noindent This implies that $\alpha_\cdot $ and $\beta_\cdot $ will stay
bounded. Then $\dot{\beta}_\cdot$ will be small, and, hence, $\beta_\cdot$ will be close to $\beta_0 = I$, uniformly on $[0,1]$. 
In turn, $\dot\alpha_s$ is the sum of a small term plus $ 2 \ g^{-1}(X_s)\beta _{s}   \approx  2 g^{-1} (x)$. %
In other words, 
$$\dot{\alpha}_s =  2 g^{-1}(x) + \Theta_s(x,p),$$
where
$$\lim_{\delta \to 0} \sup_{|p|\leq \delta} \sup_{s \in [0,1]} \sup_{x \in \R^N} \left|\Theta_s(x,p) \right|= 0.$$
Since  $D_p E_x (p) = \alpha_1 =  \int_0^1 \dot\alpha_s ds$, it follows that, there exists some $\delta>0$, which can be taken proportional to $M^{-4}$ where $M=1+ \| g \|_{C^{2}}
+\| g^{-1}\|_{C^{2}}$, such that%

\begin{equation*}
 | D_p E_x (p) -   2 g^{-1} (x)  |  \le \|g\|_\infty^{-1} \mbox{ for all } x,p \in \R^N, |p| \leq \delta.
\end{equation*}
 Note that the choice of the constant $ \|g\|_\infty^{-1}>0$ on the right-hand side guarantees that $D_p E_x (p)$ remains non-degenerate, uniformly in $x$.

\noindent It follows by the inverse function theorem  
that $p \mapsto E_{x}(p)$ is a diffeomorphism from $B_\delta$ onto a neighbourhood of $x$. We claim that this neighbourhood contains
a ball of radius $\Upsilon>0$, which may be taken independent of $x$.

\noindent For this we observe that, with $p = E_x^{-1}(y)$, (\ref{energyHamiltonian}) yields
$$d_g(x,y) = \sqrt{e_g (x, y)} = \sqrt{ H (x, p) } = \sqrt{ (g^{-1}(x) p, p) } .$$
Hence it suffices to choose $\Upsilon>0$ small enough so that $(g^{-1}(x) p,p) \leq \Upsilon^2$ implies
$\left\vert p \right\vert \leq \delta$, an obvious choice being $\Upsilon = \delta /c$, where $c^2$ is the ellipticity constant of $g$. 

\noindent At last, we note that \eqref{eq:DxE}, in conjunction with the just obtained
quantitative bounds, implies that $D^{2}e_{g}$ is
bounded on $\left\{ (x,y)\in \R^N\times \R^N: d_{g}(x,y)<\Upsilon \right\}$. 
 Indeed, with $p=E_x^{-1}(y)$, we have $-D^2_{xy} e_g(x,y) = D_y E_x^{-1}(y) = (D_p E_x(p))^{-1} \approx \frac{1}{2} g(x)$ 
which readily leads to bounds of the second mixed derivatives, uniformly over $x,y$ of distance at most $\Upsilon$. Similar uniform bounds
for $D^2_{xx} e_g$ (and then $D^2_{yy} e_g$) are obtained by differentiating (\ref{p1minusp0}) with respect to $x$ and estimating the resulting 
right-hand side. 
\end{proof}

\noindent The comparison proofs in the viscosity theory typically employ
quadratic penalty function $\phi \left( x,y\right) =\frac{1}{2}\left\vert
x-y\right\vert ^{2}$ and make use of (trivial) identities such as%
\begin{equation*}
D_{x}\phi +D_{y}\phi =(x-y)+(y-x)=0
\end{equation*}%
and %
\begin{equation*}
\left( 
\begin{array}{cc}
D^2_{xx}\phi & D^2_{xy}\phi \\ 
D^2_{yx}\phi & D^2_{yy}\phi%
\end{array}%
\right) = \left( 
\begin{array}{cc}
I & -I \\ 
-I & I%
\end{array}%
\right) , \,\, \,
\,\,\left( p,\,q\right) \left( 
\begin{array}{cc}
D^2_{xx}\phi & D^2_{xy}\phi \\ 
D^2_{yx}\phi & D^2_{yy}\phi%
\end{array}%
\right) \left( 
\begin{array}{c}
p \\ 
q%
\end{array}%
\right) \leq \left\vert p-q\right\vert ^{2}\text{.}
\end{equation*}%

\noindent To see what one can expect in more general settings, consider first the
case of $g$ obtained from the Euclidean metric, written in different
coordinates, say $x=\Psi ^{-1}\left( \tilde{x}\right) $, in which  case, we  have%
\begin{equation*}
e_g\left( x,y\right) =\left\vert \Psi \left( x\right) -\Psi \left(
y\right) \right\vert ^{2}.
\end{equation*}%
\noindent If $\Psi $ is bounded in $C^{2}$, it is immediate that%
\begin{eqnarray*}
D_{x}e+D_{y}e &=&(\Psi \left( x\right) -\Psi \left( y\right) )D\Psi \left(
x\right) +(\Psi \left( y\right) -\Psi \left( x\right) )D\Psi \left( y\right)
\\
&=&(\Psi \left( x\right) -\Psi \left( y\right) \left( D\Psi \left( y\right)
-D\Psi \left( y\right) \right),
\end{eqnarray*}
and, hence, 
\begin{equation}
\left\vert D_{x}e+D_{y}e\right\vert \lesssim \left\vert x-y\right\vert
^{2}
\end{equation}%
and, similarly, 
\begin{equation*}
\left( p,\,q\right) \left( 
\begin{array}{cc}
D^2_{xx}e & D^2_{xy}e \\ 
D^2_{yx}e & D^2_{yy}e%
\end{array}%
\right) \left( 
\begin{array}{c}
p \\ 
q%
\end{array}%
\right) \lesssim \left\vert p-q\right\vert ^{2}+\left\vert x-y\right\vert
^{2}.
\end{equation*}%

\noindent Unfortunately no such arguments work in the case of general Riemannian
metric, since, in general, there is no change of variables of the form $\left\vert \Psi \left(
x\right) -\Psi \left( y\right) \right\vert $ that reduces $d_{g}\left(
x,y\right) $ to a Euclidean distance. 
\smallskip

\noindent The next two propositions provide estimates that can be used in the comparison proofs in place of the exact identities above.

\begin{proposition} \label{prop:DxePlusEye} Assume \eqref{takis bound}. 
Then there exists $\Upsilon>0$ such that whenever $d_g(x,y) < \Upsilon$,
\begin{equation} \label{eq:exey}
\left\vert D_{x}e_g+D_{y}e_g\right\vert \le L \left\vert x-y\right\vert ^{2}
\end{equation}%
with a constant $L$ that depends only on the $C^{1}$-bounds of $g,g^{-1}$. 
\end{proposition}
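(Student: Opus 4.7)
The plan is to exploit the representation of $D_x e_g$ and $D_y e_g$ in terms of the Hamiltonian characteristic flow developed in the proof of Proposition~\ref{prop:g}. For $d_g(x,y) < \Upsilon$, that proof gives a unique characteristic curve $(X_s, P_s)_{s \in [0,1]}$ solving \eqref{eq:HJ} with $X_0 = x$, $X_1 = y$, and establishes the identities $D_x e_g(x,y) = -P_0$ and $D_y e_g(x,y) = P_1$. Hence $D_x e_g + D_y e_g = P_1 - P_0$, and one can simply integrate the second line of \eqref{eq:HJ} to obtain
\begin{equation*}
D_x e_g(x,y) + D_y e_g(x,y) \; = \; -\int_0^1 \bigl( Dg^{-1}(X_s) P_s, P_s \bigr)\, ds,
\end{equation*}
which is nothing but \eqref{p1minusp0}. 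The task reduces to showing that the integrand is $O(|x-y|^2)$ uniformly in $s$.

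The second step is to control $|P_s|$. Invariance of $H(x,p) = (g^{-1}(x) p, p)$ along the characteristic flow together with \eqref{gamma_0squ} gives
\begin{equation*}
\bigl( g^{-1}(X_s) P_s, P_s \bigr) \; = \; H(X_0,P_0) \; = \; e_g(x,y) \quad \text{for all } s \in [0,1].
\end{equation*}
By the uniform ellipticity \eqref{takis positive} of $g^{-1}$, this gives $|P_s|^2 \le C\, e_g(x,y)$, and since $d_g(x,y) \le \tfrac{c}{2}|x-y|$ we conclude $|P_s|^2 \le C'\, |x-y|^2$ uniformly in $s\in [0,1]$.

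The final step combines the two: the $C^1$-boundedness of $g^{-1}$ yields $\|Dg^{-1}\|_\infty < \infty$, so
\begin{equation*}
\bigl| (Dg^{-1}(X_s) P_s, P_s) \bigr| \; \le \; \|Dg^{-1}\|_\infty\, |P_s|^2 \; \le \; L\, |x-y|^2,
\end{equation*}
with $L$ depending only on $\|Dg^{-1}\|_\infty$, $\|g^{-1}\|_\infty$ and the ellipticity constant $c$, i.e. only on the $C^1$-bounds on $g, g^{-1}$. Integrating in $s\in[0,1]$ gives \eqref{eq:exey}. There is no real obstacle here; the only point worth noting is that although Proposition~\ref{prop:g} needs $C^2$-bounds to guarantee existence/uniqueness of the geodesic and the smoothness of $e_g$ (hence of the map $y \mapsto P_0$), the quantitative estimate for $|P_1 - P_0|$ genuinely depends only on $C^1$-data, as required.
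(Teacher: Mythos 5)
Your proof is correct and follows essentially the same route as the paper: both start from the identity $D_x e_g + D_y e_g = P_1 - P_0 = -\int_0^1 (Dg^{-1}(X_s)P_s,P_s)\,ds$ obtained by integrating the characteristic ODE \eqref{eq:HJ}, and both exploit conservation of the Hamiltonian along the flow together with the uniform equivalence of $g$ with the Euclidean metric to reduce the bound to $e_g(x,y) \lesssim |x-y|^2$. The only cosmetic difference is the order of operations — you obtain the uniform-in-$s$ pointwise bound $|P_s|^2 \lesssim |x-y|^2$ first and then integrate, whereas the paper bounds $|P_s|^2$ against $(g^{-1}(X_s)P_s,P_s)$ under the integral and then evaluates $\int_0^1 H(X_s,P_s)\,ds = H(X_0,P_0) = e_g(x,y)$ — and your closing remark about why only $C^1$-bounds enter the final constant is accurate.
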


\begin{proof}  As pointed out in the proof of Proposition~\ref {prop:g}, for all $(x,y): d_{g}(x,y)<\Upsilon$,
\begin{equation*}
D_{y}e_g\left( x,y\right) +D_{x}e_g\left( x,y\right)  = -\int_{0}^{1} ( Dg^{-1}(X_{s})P_{s},P_{s} ) ds.
\end{equation*}%
Using that $g^{-1}\in C^{1}$ and the fact that $g$ is uniformly comparable to the Euclidean
metric we get 
\begin{eqnarray*}
\left\vert D_{y}e\left( x,y\right) +D_{x}e\left( x,y\right) \right\vert 
&\leq &\|g^{-1}\| _{C^{1}}\int_{0}^{1}\left\vert
P_{s}\right\vert ^{2}ds \\
&\leq & \| g^{-1} \| _{C^{1}} \| g\| _{\infty
}\int_{0}^{1}( g^{-1}\left( X_{s}\right) P_{s},P_{s})
ds
\end{eqnarray*}%
and, hence,  thanks to invariance of the Hamiltonian under the flow and (\ref{energyHamiltonian}), 
\begin{equation*}
\left\vert D_{y}e\left( x,y\right) +D_{x}e\left( x,y\right) \right\vert
\lesssim \int_{0}^{1}H(X_{s},P_{s})ds=H\left( X_{0},P_{0}\right) = e_g \left(
x,y\right) .
\end{equation*}%
Using again that $g$ is uniformly comparable to the Euclidean metric the proof is finished. 

\end{proof}

\noindent
The following claim applies  in particular under condition \eqref{takis bound}, which, in view of  Propostion \ref{prop:g},  implies $C^2$-regularity of the energy near the diagonal. The proof is based on an argument in a forthcoming paper by the last two authors \cite{LS2016}.

\begin{proposition} \label{prop:Hess}
Assume there exists $\Upsilon >0$ such that, in the set $\left\{ (x,y): d_{g}(x,y)<\Upsilon \right\}$, 
$e_g$ is twice continuously differentiable. Then, whenever $d_g(x,y) < \Upsilon$,

\begin{equation*}
\left( 
\begin{array}{cc}
D_{xx}^{2}e_{g} & D_{xy}^{2}e_{g} \\ 
D_{yx}^{2}e_{g} & D_{yy}^{2}e_{g}%
\end{array}%
\right) \leq L\left( 
\begin{array}{cc}
I_{N} & -I_{N} \\ 
-I_{N} & I_{N}%
\end{array}%
\right) +L\left\vert x-y\right\vert ^{2}I_{2N}
\end{equation*}%
with a constant $L$ that only depends on the $C^{2}$-bounds of $g,g^{-1}$.
\end{proposition}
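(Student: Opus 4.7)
The plan is to establish the stated matrix inequality by bounding the associated quadratic form pointwise: for every $(p,q) \in \R^N \times \R^N$ I will show
\begin{equation*}
(p,q)^{\top } \nabla ^{2} e_g(x,y) (p,q) \;\leq\; L \bigl( |p-q|^{2} + |x-y|^{2}(|p|^{2}+|q|^{2}) \bigr),
\end{equation*}
which is equivalent to the claim. Since the left-hand side equals $\phi ''(0)$ for $\phi (t) := e_g(x+tp,\,y+tq)$, the strategy is to bound $\phi $ from above by the energy of an explicit, non-optimal competitor joining $x+tp$ to $y+tq$, and then differentiate twice at $t=0$.

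Concretely, I fix $(x,y)$ with $d_g(x,y)<\Upsilon $ and use that, as shown in the proof of Proposition~\ref{prop:g}, there is a unique minimizing geodesic $\gamma :[0,1]\to \R^N$ from $x$ to $y$ with uniform velocity control $|\dot\gamma _s|^{2}\lesssim e_g(x,y)\lesssim |x-y|^{2}$ (by invariance of the Hamiltonian along the flow, compare \eqref{Psmall_if_p0small}, together with the ellipticity \eqref{takis positive}). I introduce the linear endpoint-interpolation family
\begin{equation*}
\gamma ^{t}_{s} := \gamma _{s} + t\beta _{s}, \qquad \beta _{s} := (1-s)p + sq,
\end{equation*}
which connects $x+tp$ to $y+tq$ for small $t$. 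By the variational definition \eqref{eq:DefE},
\begin{equation*}
\phi (t) \;\leq\; E(t) := \tfrac{1}{4}\int_{0}^{1} (g(\gamma ^{t}_{s})\dot\gamma ^{t}_{s},\dot\gamma ^{t}_{s})\,ds,
\end{equation*}
with equality at $t=0$ because $\gamma $ is energy-minimizing. The hypothesis that $e_g$ is $C^{2}$ near the diagonal makes $\phi $ of class $C^{2}$ in a neighborhood of $0$, while $E$ is smooth in $t$ because $g\in C^{2}$. Therefore $E-\phi $ attains a local minimum at $0$, which yields the key inequality $\phi ''(0)\leq E''(0)$.

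The remaining step is the second-variation calculation. Using $\partial _{t}\gamma ^{t}_{s}|_{t=0}=\beta _{s}$ and $\partial _{t}\dot\gamma ^{t}_{s}\equiv q-p$, a direct expansion in $t$ gives
\begin{equation*}
E''(0) = \tfrac{1}{4}\int _{0}^{1}\Bigl[ 2(g(\gamma _{s})(q-p),q-p) + 4(Dg(\gamma _{s})[\beta _{s}](q-p),\dot\gamma _{s}) + (D^{2}g(\gamma _{s})(\beta _{s},\beta _{s})\dot\gamma _{s},\dot\gamma _{s}) \Bigr]\,ds.
\end{equation*}
I then estimate the three integrands: the first is bounded by $\|g\|_{\infty }|p-q|^{2}$; the third by $\|g\|_{C^{2}}(|p|+|q|)^{2}|x-y|^{2}$ using the velocity bound; and the middle cross-term, of order $(|p|+|q|)|p-q||x-y|$, is absorbed by AM-GM into $\tfrac{1}{2}|p-q|^{2}+C(|p|^{2}+|q|^{2})|x-y|^{2}$. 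Summing gives $E''(0)\leq L(|p-q|^{2}+|x-y|^{2}(|p|^{2}+|q|^{2}))$, with $L$ depending only on $\|g\|_{C^{2}}$ and the ellipticity constant.

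The only mildly delicate ingredient is the uniform velocity bound $|\dot\gamma _{s}|\lesssim |x-y|$, but this is already contained in the geodesic-flow analysis of Proposition~\ref{prop:g}; beyond that, the argument is a standard second-variation computation along a non-geodesic competitor, and no new ingredient is required.
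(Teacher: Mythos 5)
Your proof is correct and follows essentially the same route as the paper: you compare $e_g$ against the energy of the identical linearly-perturbed competitor family $\gamma_s+t((1-s)p+sq)$, then bound the second variation using the uniform velocity bound along the minimizing geodesic and the $C^2$-bounds on $g$. The only (cosmetic) difference is that the paper expands the symmetric second difference $e_g(x+\varepsilon p,y+\varepsilon q)+e_g(x-\varepsilon p,y-\varepsilon q)-2e_g(x,y)$ and lets $\varepsilon\to 0$, whereas you deduce $\phi''(0)\le E''(0)$ from the fact that $E-\phi\ge 0$ has a local minimum at $t=0$.
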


\begin{proof}
In view of the assumed $C^{2}$-regularity of the energy, we find that, as $\varepsilon\to 0$, 
\begin{equation*}
\begin{cases}
\varepsilon ^{2}\left( \left( p,q\right) \left( 
\begin{array}{cc}
D_{xx}^{2}e_{g} & D_{xy}^{2}e_{g} \\ 
D_{yx}^{2}e_{g} & D_{yy}^{2}e_{g}%
\end{array}%
\right) \left( 
\begin{array}{c}
p \\ 
q%
\end{array}%
\right) \right)  \\[1mm]
=e_{g}\left( x+\varepsilon p,y+\varepsilon q\right)
+e_{g}\left( x-\varepsilon p,y-\varepsilon w\right) -2e_{g}\left( x,y\right) 
\end{cases}
\end{equation*}
We estimate the second-order difference on the right-hand side, keeping $%
v=\varepsilon p,w=\varepsilon q$ fixed. 

\noindent Let $(\gamma_t)_{t\in [0,1]}$
be a geodesic connecting $x,y$,
parametrized at constant speed (in the metric $g$) so that%
\begin{equation}  \label{speed_est}
\left\vert \dot{\gamma}_{t}\right\vert \leq c\left( g\left( \gamma
_{t}\right) \dot{\gamma}_{t},\dot{\gamma}_{t}\right) ^{1/2}=
2cd_{g}\left( x,y\right) \leq C\left\vert x-y\right\vert ,
\end{equation}
and, for $\Delta :=w-v$, define the paths  
\begin{equation*}
\gamma _{t}^{\pm } : =\gamma _{t}\pm \left( v+t\Delta \right) 
\end{equation*}%
which connect $x\pm v$ to  $y\pm w$. 
\noindent Then%
\begin{eqnarray*}
&&e\left( x+v,y+w\right) +e\left( x-v,y-w\right) -2e\left( x,y\right)  \\
&\leq &\int_{0}^{1}\frac{1}{4}\left( g\left( \gamma _{t}^{+}\right) \dot{%
\gamma}_{t}^{+},\dot{\gamma}_{t}^{+}\right) dt+\int_{0}^{1}\frac{1}{4}\left(
g\left( \gamma _{t}^{-}\right) \dot{\gamma}_{t}^{-},\dot{\gamma}%
_{t}^{-}\right) dt-2\int_{0}^{1}\frac{1}{4}\left( g\left( \gamma _{t}\right) 
\dot{\gamma}_{t},\dot{\gamma}_{t}\right) dt \\
&=&\frac{1}{4}\int_{0}^{1}\left[ \left( g\left( \gamma _{t}^{+}\right)
\left( \dot{\gamma}_{t}+\Delta \right) ,\dot{\gamma}_{t}+\Delta \right)
+\left( g\left( \gamma _{t}^{-}\right) \left( \dot{\gamma}_{t}-\Delta
\right) ,\dot{\gamma}_{t}-\Delta \right) dt-2\left( g\left( \gamma
_{t}\right) \dot{\gamma}_{t},\dot{\gamma}_{t}\right) \right] dt
\end{eqnarray*}%
Using the $C^{2}$-regularity of $g$, writing $\delta =v+t\Delta $, and noting that 
that $\left\vert \delta \right\vert \leq \left\vert v\right\vert +\left\vert
w\right\vert $, we find %
\begin{equation*}
g\left( \gamma _{t}^{\pm }\right) =g\left( \gamma _{t}\right) \pm \left(
Dg\left( \gamma _{t}\right) ,\delta \right) +\frac{1}{2}\left( D^{2}g\left(
\gamma _{t}\right) \delta ,\delta \right) +o\left( \left\vert v\right\vert
^{2}+\left\vert w\right\vert ^{2}\right) .
\end{equation*}%
Collecting terms (in $g,Dg,D^{2}g$) then leads to%
\begin{equation*}
e\left( x+v,y+w\right) +e\left( x-v,y-w\right) -2e\left( x,y\right) \leq 
\frac{1}{4}\int_{0}^{1}\left[ \left( i\right) +\left( ii\right) +\left(
iii\right) +\left( E\right) \right] dt
\end{equation*}%
where 
\begin{equation*}
\begin{cases}
\left( i\right):=2\left( g\left( \gamma _{t}\right) \Delta ,\Delta
\right) \\[1mm]
\left( ii\right):=4\left( \left( Dg\left( \gamma _{t}\right) ,\delta
\right) \dot{\gamma}_{t},\Delta \right)\\[1mm]
 \left( iii\right):=\left( \left( D^{2}g\left( \gamma _{t}\right) \delta
,\delta \right) \dot{\gamma}_{t},\dot{\gamma}_{t}\right) +\left( \left(
D^{2}g\left( \gamma _{t}\right) \delta ,\delta \right) \Delta ,\Delta
\right) 
 \end{cases}
 \end{equation*}
\noindent It is immediate that,  
\begin{equation*}
\begin{cases}
\left\vert \left( i\right) \right\vert \leq 2\left\Vert
g\right\Vert _{\infty }\left\vert w-v\right\vert ^{2}\\[1mm]
\left\vert \left( ii\right) \right\vert \leq 4C\left\Vert
Dg\right\Vert _{\infty }\left( \left\vert v\right\vert +\left\vert
w\right\vert \right) \left\vert x-y\right\vert \left\vert w-v\right\vert \\[1mm]
\left\vert iii\right\vert \leq \left\Vert D^{2}g\right\Vert
_{\infty }\left( \left\vert v\right\vert ^{2}+\left\vert w\right\vert
^{2}\right) \left( C^{2}\left\vert x-y\right\vert ^{2}+\left\vert
w-v\right\vert ^{2}\right).
\end{cases}
\end{equation*}
\noindent Moreover, expanding $g$, as $v,w\rightarrow 0$,  we find 
\begin{equation*}
\left( E\right) =(\left\vert \dot{\gamma}_{t}\right\vert ^{2}+\left\vert
\Delta \right\vert ^{2}) o\left( \left\vert v\right\vert
^{2}+\left\vert w\right\vert ^{2}\right) =(C^{2}\left\vert x-y\right\vert
^{2}+\left\vert w-v\right\vert ^{2}) o\left( \left\vert v\right\vert
^{2}+\left\vert w\right\vert ^{2}\right).
\end{equation*}

\noindent With $v=\varepsilon p,w=\varepsilon q$ all terms above are of order $O\left(
\varepsilon ^{2}\right) $, with the exception of the second  term contributing to $\left(
iii\right) $ and the error term $\left( E\right) $ which are actually $o\left(
\varepsilon ^{2}\right) $, and hence negligible as $\varepsilon \rightarrow 0
$. 

\noindent Indeed 
\begin{eqnarray*}
&&e( x+\varepsilon p,y+\varepsilon q) +e( x-\varepsilon
p,y-\varepsilon w) -2e( x,y)   \\
 &\leq&\frac{\varepsilon ^{2}}{4}[ 2\Vert g\Vert _{\infty
}\vert p-q \vert ^{2}+4C\Vert Dg\Vert _{\infty }(
\vert p\vert +\vert q\vert) \vert
x-y\vert \vert p-q\vert  \\
&&+\Vert D^{2}g\Vert _{\infty }( \vert
p\vert ^{2}+\vert q\vert ^{2}) ( C^{2}\vert
x-y\vert ^{2}+O( \varepsilon ^{2})) +(\vert
x-y\vert ^{2}+O( \varepsilon ^{2}) )o( 1)] .
\end{eqnarray*}%
\noindent Using again the   Cauchy- Schwarz inequality to handle the middle term in the
estimate, we find, for some  $K>0$ that only depends on $g$, 
\begin{eqnarray*}
&&\left( p,q\right) \left( 
\begin{array}{cc}
D_{xx}^{2}e_{g} & D_{xy}^{2}e_{g} \\ 
D_{yx}^{2}e_{g} & D_{yy}^{2}e_{g}%
\end{array}%
\right) \left( 
\begin{array}{c}
p \\ 
q%
\end{array}%
\right)  \\
&\leq & [ \frac{1}{2}\left\Vert g\right\Vert _{\infty }\left\vert
p-q\right\vert ^{2}+C\left\Vert Dg\right\Vert _{\infty }\left( \left\vert
p\right\vert +\left\vert q\right\vert \right) \left\vert x-y\right\vert
\left\vert p-q\right\vert +\frac{C^{2}}{4}\left\Vert D^{2}g\right\Vert _{\infty
}( \left\vert p\right\vert ^{2}+\left\vert q\right\vert ^{2})
\left\vert x-y\right\vert ^{2}]  \\
&\leq &K\left\vert p-q\right\vert ^{2}+K\left\vert x-y\right\vert ^{2}\left(
\left\vert p\right\vert ^{2}+\left\vert q^{2}\right\vert \right).
\end{eqnarray*}

\end{proof}

\noindent We conclude by checking that the assumptions of Theorem~\ref{thm:2ndorder} are satisfied for $F$ of Hamilton-Jacobi-Isaacs type. Note that condition \eqref{takis bound} is valid, as demanded by that theorem. 

\begin{proposition}
Let $F$ be given by \eqref{F_generic}, satisfying \eqref{takis sigma1} and \eqref{takis sigma2}. Then $F$ satisfies \eqref{deg}-\eqref{thelemma}.
\end{proposition}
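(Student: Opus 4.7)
The plan is to verify the seven conditions in turn, exploiting the $\inf$--$\sup$ structure of $F$. Conditions \eqref{deg}, \eqref{takis lip}, \eqref{bounded}, and \eqref{uniform} follow routinely: positivity of $\sigma_{\alpha\beta}\sigma_{\alpha\beta}^T$ gives monotonicity in $M$ at each $(\alpha,\beta)$, preserved by $\inf$--$\sup$; the $r$-dependence is linear through $-c_{\alpha\beta}(x)r$, so the Lipschitz constant is $\sup_{\alpha,\beta,x}|c_{\alpha\beta}(x)|$, finite by \eqref{takis 1000}; $F(0,0,0,x,t) = \inf_\alpha \sup_\beta b_{\alpha\beta}(0,x)$ is bounded by the same condition; and the moduli in \eqref{takis sigma1}--\eqref{takis sigma2} together with boundedness of the coefficients transfer from each $(\alpha,\beta)$-slice to the $\inf$--$\sup$, yielding uniform continuity on bounded subsets of $(X,p,r)$.

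The real work is in \eqref{thelemma}. Using $\inf\sup A - \inf\sup B \leq \sup(A-B)$, it suffices to estimate, for each fixed $(\alpha,\beta)$ (dropping the subscripts on $\sigma,b,c$),
\[
T := \mathrm{tr}\bigl(\sigma\sigma^T(p_1,x)X - \sigma\sigma^T(p_2,y)Y\bigr) + [b(p_1,x)-b(p_2,y)] - r\,[c(x)-c(y)],
\]
where $p_1 := \alpha D_x d_g^2(x,y)$ and $p_2 := -\alpha D_y d_g^2(x,y)$; I read the second $F$ in \eqref{thelemma} as evaluated at $y$, consistent with how the condition is used in Section~3 at the doubling points $(\hat x,\hat y)$. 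The strategy is the classical Ishii trick adapted to the Riemannian penalty: rewrite the trace difference as $\mathrm{tr}\bigl(Z^T\,\mathrm{diag}(X,-Y)\,Z\bigr)$ with $Z$ the vertical stacking of $\sigma(p_1,x)$ and $\sigma(p_2,y)$, then substitute the matrix bound $\mathrm{diag}(X,-Y) \leq \alpha A + \varepsilon A^2$ from \eqref{thelemma} for $A = D^2 d_g^2(x,y)$.

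The decisive ingredient is Proposition \ref{prop:Hess}, which dominates $A$ by $L$ times the block matrix whose four $N\times N$ blocks are $I,-I,-I,I$ plus $L|x-y|^2 I_{2N}$; this converts $\alpha\,\mathrm{tr}(Z^T A Z)$ into $L\alpha\|\sigma(p_1,x)-\sigma(p_2,y)\|^2 + O(\alpha|x-y|^2)$. The first piece is handled via \eqref{takis sigma1} provided one knows $|p_1-p_2|/(|p_1|+|p_2|) = O(|x-y|)$. This requires two inputs. Proposition \ref{prop:DxePlusEye} gives the first-order cancellation $|p_1-p_2| = \alpha|D_x e_g + D_y e_g| \leq L\alpha|x-y|^2$, while the eikonal identity $(g^{-1}D_x e_g, D_x e_g) = e_g$ from Proposition \ref{prop:eik}, together with uniform ellipticity of $g$, yields $|p_i| \asymp \alpha d_g \asymp \alpha|x-y|$ for $i=1,2$, so $|p_1|+|p_2| \gtrsim \alpha|x-y|$. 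The $\varepsilon A^2$ contribution is absorbed by boundedness of $A$ (Proposition \ref{prop:g}) and of $\sigma$, contributing $O(\varepsilon)$. The drift term is bounded by \eqref{takis sigma2} at argument $(1+|p_1|+|p_2|)|x-y|+|p_1-p_2| \leq C(d_g+\alpha d_g^2)$, and the zero-order term by $|r||c(x)-c(y)| \leq R\,\omega(Cd_g)$. Summing yields a single modulus in $\alpha d_g^2 + d_g + \varepsilon$, as demanded.

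The main obstacle is precisely the non-degeneracy $|p_1|+|p_2| \gtrsim \alpha|x-y|$ needed to neutralize the non-Lipschitz denominator in \eqref{takis sigma1}: in the Euclidean case it is free because $p_1=p_2=2\alpha(x-y)$, but in the Riemannian setting it genuinely uses the eikonal equation plus uniform ellipticity of the metric to prevent the momenta from degenerating. This is the one place where the full strength of \eqref{takis bound}, via Propositions \ref{prop:g}--\ref{prop:Hess}, is essential.
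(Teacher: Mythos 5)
Your proof matches the paper's argument essentially step for step: reduce to the quasilinear case via $\inf\sup A - \inf\sup B \leq \sup(A-B)$, and then handle \eqref{thelemma} by combining the Ishii trace trick with the matrix bound of Proposition~\ref{prop:Hess}, the near-cancellation $|p-q|\lesssim \alpha|x-y|^2$ from Proposition~\ref{prop:DxePlusEye}, and the non-degeneracy $|p|+|q|\asymp \alpha|x-y|$ from the eikonal identity and uniform ellipticity, exactly as in the paper. The reading of \eqref{thelemma} with the second $F$ evaluated at $y$ (a typo in the paper's statement of that condition) is also what the paper's own proof implicitly does; the only slip is the cosmetic remark that there are ``seven'' conditions in \eqref{deg}--\eqref{thelemma}, when in fact there are five.
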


\begin{proof}
 Since it is clear that all assumptions  are stable under taking sup and inf, we will only treat the quasilinear case
$$F(M,p,r,x) = \operatorname{Tr}(a(p,x)M) + b(p,x) - c(x) r,$$ 
and we concentrate on  \eqref{thelemma}, since the others are obvious.

\noindent
\noindent With $t,x,y,r,\alpha,X,Y$ taken as in the
statement of the assumption, we have 
\begin{eqnarray*}
&&F(X, \alpha D_{x}d_g^{2}(x,y), r,x,t)-F(Y,-\alpha D_{y}d_g^{2}(x,y),r,x,t)
\\
&=& b\left(p,x \right) - b\left(q,y \right) + \mathrm{Tr}\left(\sigma
\sigma^T(p,x) X - \sigma \sigma^T(q,y) Y\right) + (c(x)-c(y))r,
\end{eqnarray*}
where for simplicity we write  $p: =\alpha D_{x}d_g^{2}(x,y)$ and $q:=-\alpha
D_{y}d_g^{2}(x,y)$. 

\noindent Noting  that Proposition \ref{prop:DxePlusEye} yields 
\begin{equation*}
|p-q| \leq \alpha K |x-y|^2,
\end{equation*}
and using the eikonal equation for $e_g = d_g^2$, for some $C>0$ we find  
\begin{equation*}
\alpha C^{-1} |x-y| \leq |p| + |q| \leq \alpha C |x-y|.
\end{equation*}
\noindent The assumptions on $b$ also give 
\begin{eqnarray*}
b\left(p,x \right) - b\left(q,y\right) &\leq& \omega\left( \left(1 + |p| +
|q| \right) |x-y| +|p-q|\right) \\
&\leq& \omega\left( \left(1+C \alpha |x-y|\right) |x-y| + K \alpha |x-y|^2
\right).
\end{eqnarray*}
\noindent For the second order term, we use Proposition \ref{prop:Hess} and get 
%
%
\begin{eqnarray*}
\mathrm{Tr}\left(\sigma \sigma^T(p,x) X - \sigma \sigma^T(q,y) Y\right) &=& 
(\sigma(p,x)  , \,  \sigma(q,y))%
\begin{pmatrix}
X & 0 \\ 
0 & -Y%
\end{pmatrix}
\begin{pmatrix}
\sigma(p,x) \\ 
\sigma(q,y)%
\end{pmatrix}
\\
&\leq& \alpha \left( \sigma(p,x), \,  \sigma(q,y)\right) \left( 
\begin{array}{cc}
D^2_{xx}e & D^2_{xy}e \\ 
D^2_{yx}e & D^2_{yy}e%
\end{array}%
\right) \left( 
\begin{array}{c}
\sigma(p,x) \\ 
\sigma(q,y)%
\end{array}%
\right) \\
&& + \;\; \varepsilon \left\| \left( 
\begin{array}{cc}
D^2_{xx}e & D^2_{xy}e \\ 
D^2_{yx}e & D^2_{yy}e%
\end{array}%
\right) \left( 
\begin{array}{c}
\sigma(p,x) \\ 
\sigma(q,y)%
\end{array}%
\right) \right\|^2 \\
&\leq& K \alpha ( \left\vert\sigma(p,x) - \sigma(q,y)\right\vert
^{2}+\left\vert x-y\right\vert ^{2}) + K \varepsilon\\
&\leq& K^{\prime }\alpha\left( |x-y|^2 + \frac{|p-q|^2}{(|p|+|q|)^2} \right) + K^{\prime } \varepsilon
\\
&\leq& K^{\prime \prime} \left( \alpha |x-y|^2 + \varepsilon\right).
\end{eqnarray*}
%

\end{proof}

%
%

\newpage

\bigskip
\bigskip

\noindent ($^{1}$)  Technische Universit\"at Berlin and Weierstra\ss --Institut f\"ur Angewandte Analysis und Stochastik\\
Stra\ss e des 17. Juni 136 \\
10623 Berlin \\
email: friz@math.tu-berlin.de
\\ \\
\noindent ($^{2}$) CEREMADE, Universit\'e de Paris-Dauphine, \\
Place du Mar\'echal-de-Lattre-de-Tassigny \\
75775 Paris cedex 16, France \\
email: gassiat@ceremade.dauphine.fr
\\ \\ 
\noindent ($^{3}$)  Coll{\`e}ge de France and CEREMADE, Universit{\'e} de Paris-Dauphine, \\
1, Place Marcellin Berthelot \\
75005 Paris Cedex 5, France \\
email: lions@ceremade.dauphine.fr
\\ \\
($^{4}$)  Department of Mathematics \\
             University of Chicago \\
             Chicago, IL 60637, USA \\
            email: souganidis@math.uchicago.edu
\\ \\
($^{*}$)  Partially supported by the European Research Council under the European Union’s Seventh Framework Programme (FP7/2007-2013)
/ ERC grant agreement nr. 258237
\\ \\
($^{**}$)  Partially supported by the National Science Foundation grant DMS-1266383.

\end{document}